% !TeX spellcheck = en_US
\documentclass{article}

\usepackage[authoryear]{natbib}
\usepackage[utf8]{inputenc}
\usepackage[english]{babel}
\usepackage{amsmath}
\usepackage{amsthm}
\usepackage{amssymb}
\usepackage{clrscode3e}
\usepackage[hidelinks]{hyperref}
\usepackage{float}
\usepackage{xspace}
\usepackage{xcolor}
\usepackage{soul}

\usepackage[T1]{fontenc}

\setlength{\marginparwidth}{2cm}
\usepackage{todonotes}
\usepackage[a4paper,left=2.5cm,right=2.0cm,top=2.5cm, bottom=2.0cm]{geometry}
\usepackage{pdflscape}
\usepackage{afterpage}
\usepackage{rotating}
\usepackage{tikz}

\usepackage{pgfplots}
\pgfplotsset{compat=newest}

\usepackage{mathtools}

\usepackage{booktabs}
%\usepackage{multirow}
%\usepackage{hhline}
%\usepackage{boldline}
%\usepackage[ruled, vlined, algonl]{algorithm2e}
%\usepackage{hyphenat}
%\usepackage{lscape}
%\usepackage{colortbl}
%\usepackage{rotating}
%\usepackage{graphicx}
%\usepackage{subfigure}
%\usepackage{fixltx2e}
%\usepackage{mathrsfs}

%\makeatletter
%\newcommand{\whline}{%
%    \noalign {\hrule height 2pt}%
%    }
%\newcommand{\wcline}[1]{%
%    \@wcline #1\@nil%
%    }
%\newcolumntype{'}{!{\vrule width 2pt}}
%\makeatother

%\SetKwInOut{Input}{Input}
%\SetKwInOut{Output}{Output}

% \journal{Elsevier}
% %\bibliographystyle{splncs03}
% \bibliographystyle{apa}

% \linespread{1.2}
\sloppy

\newcommand{\eps}{\varepsilon}

\theoremstyle{definition}
\newtheorem{lemma}{Lemma}
\newtheorem{theorem}{Theorem}
\newtheorem{corollary}{Corollary}
\newtheorem{definition}{Definition}

\newcommand{\PP}{\mbox{P}\xspace}
\newcommand{\NP}{\mbox{NP}\xspace}
\newcommand{\APX}{\mbox{APX}\xspace}

\DeclareMathOperator{\opt}{\mbox{OPT}}

\DeclarePairedDelimiter{\ceil}{\lceil}{\rceil}

\newcommand{\patternset}{\mathcal{P}}
\newcommand{\calb}{\mathcal{B}}
\newcommand{\calf}{\mathcal{F}}

\newcommand{\cali}{\mathcal{I}}

\newcommand{\calk}{\mathcal{K}}
\newcommand{\call}{\mathcal{L}}

\newcommand{\calp}{\mathcal{P}}
\newcommand{\calr}{\mathcal{R}}
\newcommand{\cals}{\mathcal{S}}
\newcommand{\calt}{\mathcal{T}}

\newcommand{\val}{\ensuremath{\textit{V}}}

\newcommand{\vvp}{\ensuremath{\val_{\textit{VBPP}}}}
\newcommand{\vbps}{\ensuremath{\val_{\textit{BPPS}}}}

% to-do's

% DEBUG:
% Procura onde uma letra #1 foi usada no modo matem. e substitui por texto #2
% \newcommand{\FindWhereWasUsed}[2]{
% \begingroup\lccode`~=`#1\relax
% \lowercase{\endgroup\def~}{{\color{red}\text{#2}}}%
% \AtBeginDocument{\mathcode`#1="8000}}
% \FindWhereWasUsed{l}{lll}
% \FindWhereWasUsed{S}{SSS}
% \FindWhereWasUsed{A}{AAA}

% Coloquei para parar de quebrar as citações
%\makeatletter
%% \def\NAT@spacechar{\ }% OLD
%\def\NAT@spacechar{~}% NEW
%\makeatother

\newcommand{\footremember}[2]{%
    \footnote{#2}
    \newcounter{#1}
    \setcounter{#1}{\value{footnote}}%
}
\newcommand{\footrecall}[1]{%
    \footnotemark[\value{#1}]%
} 

\begin{document}

\title{Algorithms for the Bin Packing Problem with Scenarios}
\author{
  Yulle G. F. Borges\footnote{Corresponding Author, Email Address: \texttt{glebbyo@ic.unicamp.br}} \footremember{ic}{Institute of Computing, University of Campinas, Campinas--SP, Brazil.} \and  
    Vin\'icius L. de Lima\footrecall{ic} \and
    Fl\'avio K. Miyazawa\footrecall{ic} \and
    Lehilton L. C. Pedrosa\footrecall{ic}\and
    Thiago A. de Queiroz\footremember{ufcat}{Institute of Mathematics and Technology, Federal University of Catal\~ao, Catal\~ao--GO, Brazil.}\and
    Rafael C. S. Schouery\footrecall{ic}
}

%\institute{Yulle G. F. Borges \at
%            Corresponding author\\
%            ORCID\@: 0000-0002-1865-4104\\
%            Institute of Computing, University of Campinas, Campinas--SP, Brazil. \\
%              \email{glebbyo@unicamp.br}           %  \\
%           \and
%           Vin\'icius L. de Lima \at
%            ORCID\@: 0000-0002-4805-4468\\
%            Institute of Computing, University of Campinas, Campinas--SP, Brazil. \\
%           \and
%           Fl\'avio K. Miyazawa \at
%            ORCID\@: 0000-0002-1067-6421\\
%            Institute of Computing, University of Campinas, Campinas--SP, Brazil. \\
%           \and
%           Lehilton L. C. Pedrosa \at
%            ORCID\@: 0000-0003-1001-082X\\
%            Institute of Computing, University of Campinas, Campinas--SP, Brazil. \\
%           \and
%           Thiago A. de Queiroz \at
%            ORCID\@: 0000-0003-2674-3366\\
%            Institute of Mathematics and Technology, Federal University of Catal\~ao, Catal\~ao--GO, Brazil.\\
%              \and
%           Rafael C. S. Schouery \at
%            ORCID\@: 0000-0002-0472-4810\\
%            Institute of Computing, University of Campinas, Campinas--SP, Brazil. \\
%}

\date{}

\maketitle

\begin{abstract}
This paper presents theoretical and practical results for the bin packing problem with scenarios, a generalization of the classical bin packing problem which considers the presence of uncertain scenarios, of which only one is realized. For this problem, we propose an absolute approximation algorithm whose ratio is bounded by the square root of the number of scenarios times the approximation ratio for an algorithm for the vector bin packing problem. We also show how an asymptotic polynomial-time approximation scheme is derived when the number of scenarios is constant. As a practical study of the problem, we present a branch-and-price algorithm to solve an exponential model and a variable neighborhood search heuristic. To speed up the convergence of the exact algorithm, we also consider lower bounds based on dual feasible functions. Results of these algorithms show the competence of the branch-and-price in obtaining optimal solutions for about 59\% of the instances considered, while the combined heuristic and branch-and-price optimally solved 62\% of the instances considered.

\paragraph{Keywords}
  Bin Packing Problem; \and Scenarios; \and Approximation Algorithm; \and Variable Neighborhood Search; \and Branch-and-Price Algorithm;
\end{abstract}

%\section*{Statements and Declarations}
%Conflict of Interest: 
%Author Yulle G. F. Borges declares that he has no conflict of interest. 
%Author Vinícius L. de Lima declares that he has no conflict of interest. 
%Author Flávio K. Miyazawa declares that he has no conflict of interest. 
%Author Lehilton L. C. Pedrosa declares that he has no conflict of interest. 
%Author Thiago A. de Queiroz declares that he has no conflict of interest. 
%Author Rafael C. S. Schouery declares that he has no conflict of interest. 
%
%\clearpage

\section{Introduction}
Cutting and packing problems have been widely studied in the context of Operations Research, mainly because of their properties and real-world applicability~\citep{WascherHS07}.  Among these problems, the \emph{Bin Packing Problem} (BPP) asks to pack a set of one-dimensional items, each of a given size, into the least possible number of bins of given identical capacities, where the total size of items in a bin does not exceed the bin capacity.  Several variants of this problem have been studied in the literature, either due to their theoretical interest, or their high applicability in different industries~\citep{sweeneyP92,DAHMANI2013}.

The BPP and its variants have been extensively investigated since the thirties~\citep{Kantorovich60}. Such problems are among the most studied in approximation contexts. A recent survey by~\citet{coffmanCGMV13} presented over 180 references related to approximation results for the BPP and its variants. 
In particular, the BPP is shown to be \APX-hard, and indeed no algorithm has an approximation factor smaller than $3/2$, unless $\PP = \NP$~\citep{Vazirani01}. 
Regarding practical techniques for the BPP, a survey by~\citet{DELORME2016} reviewed models and solution methods, and experimentally compared the available software tools to solve the problem. Recent practical contributions for the BPP include the generalized arc-flow formulation by~\citet{BRANDAO2016}, the cooperative parallel genetic algorithm by~\citet{KUCUKYILMAZ2018}, the reflect formulation by~\citet{Delorme19}, the branch-and-cut-and-price algorithm by~\citet{WLBL20} and the framework of~\citet{Lima2023}.

Several variants of the BPP have also been extensively investigated in the literature. For example, in the variant with fragile objects, the capacity of a bin is directly influenced by its most fragile item~\citep{CLAUTIAUX2014}. In the BPP with precedence constraints, a precedent item cannot be packed in a bin later than its successors~\citep{PEREIRA2016}. The variant with overlapping items assumes that some subsets of items, when packed in the same bin, occupy less capacity than the sum of their individual size~\citep{GRANGE2018}. In the BPP with item fragmentation, items can be split and fragmentally packed~\citep{BERTAZZI2019}. In the generalized BPP, bins may have different cost and capacity and items are associated to a profit and are divided by compulsory (i.e., mandatory to load) and non-compulsory. The aim is to minimize the overall cost based on the bins cost and items profit~\citep{BALDI2019}. In the online BPP, items arrive sequentially and each must be packed before the arrival of the next one, with no information about the next items~\citep{BALOGH19}. The temporal variant considers one additional dimension in the BPP, and the bin capacity should not be violated at any unit of a discretized time horizon~\citep{DELLAMICO2020}.

One of the current challenges to solving practical logistic problems is to deal with the uncertainty arising from real-world applications~\citep{guillainPL21,xuLLJ16,juanKCF18,baghalianRF13}. In particular, one popular way to deal with this issue is by describing scenarios. A scenario is defined as a possible outcome that may arise depending on the problem's uncertain variables. Some authors use scenarios to consider the problem description as combinatorial rather than stochastic, as, e.g.,~\citet{feuersteinMSSSSZ14}. 
Considering the scarcity of works that apply this concept of scenarios to deal with uncertainty in bin packing problems and the existence of practical applications,~\citet{bodisB18} recently introduced the \emph{Bin Packing Problem with Scenarios} (BPPS).

The BPPS is a generalization of the BPP where each item belongs to a subset of scenarios, and a packing must respect the capacity constraints of the bins for each scenario individually. Whilst the set of scenarios is known in advance, only one of the scenarios will be realized. This introduces the possibility of packing in a single bin items whose combined sizes surpasses the bin capacity, as long as the bin capacity is respected in each individual scenario. Although~\citet{bodisB18} introduced three objective functions for the BPPS, we are concerned with the objective of minimizing the number of bins of the worst-case scenario, which is the most challenging one. In this way, the BPP is the particular case of the BPPS where there is a single scenario.
%
%The objective function is defined depending on the interpretation of the scenarios in a given application.
%~\citet{bodisB18} described the case of product recalls, which happens when some products depend on a certain commodity, and a batch of this commodity is found to be defective. Then, a scenario models a faulty commodity batch and include items using these batch, and bins represent warehouses that must receive the returned products. The authors considered different objective functions, depending on how pessimistic a company is, and analyzed the adapted versions of the next-fit, any-fit, and harmonic algorithms for the online BPP.

A generalization of the BPP that is similar to the BPPS is the \emph{Vector Bin Packing Problem} (VBPP), where bins and items have multiple dimensions and the bin capacity must be respected in all of its dimensions. The objective is the same as in the BPP, to minimize the number of bins. \citet{CAPRARA2001} presented lower bounds, heuristics, and a branch-and-bound (B\&B) algorithm for the VBPP with two dimensions. \citet{ALVES2014} investigated dual feasible functions for this problem. \citet{BULJUBASIC2016} proposed a local search algorithms where a Tabu search and descent search with add and drop moves were used to explore the search space.
%, efficiently solving 97\% of benchmark instances of the vector packing with two dimensions.
In~\citet{HU2017}, a set-covering model is solved by a branch-and-price (B\&P) algorithm. In~\citet{HELER2018}, there is a stabilized branch-and-price algorithm with dual optimal cuts.
% that optimally solved around 97\% of benchmark instances for the problem with two dimensions.
Recently,~\citet{WEI2020} developed a branch-and-price algorithm for the vector packing with two dimensions, where a goal cut approach is used to obtain lower bounds and a dynamic programming with branch-and-bound solves the pricing problem, improving previous results of the literature. 

To the best of our knowledge, the recent work by~\citet{bodisB18} is the only one in the literature concerned with the BPPS\@. Motivated by its interesting theoretical aspects, its general applicability and its relation to well-studied problems, this paper proposes theoretical and practical results for the BPPS\@. The contributions of the current work are the following: (i) an absolute approximation algorithm for the BPPS based on the VBPP\@; (ii) an \emph{asymptotic polynomial time approximation scheme} (APTAS) for the version of the problem with a constant number of scenarios; (iii) an exact method for the problem based on a branch-and-price algorithm for an exponential \emph{Integer Linear Programming} (ILP) model; and (iv) a \emph{Variable Neighborhood Search} (VNS) heuristic.

This paper is organized as follows. In Section~\ref{sec-related} we provide formal definitions to be used to contextualize the contributions of the paper.
In Section~\ref{sec-approx}, we show that an approximate solution of the BPPS can be obtained by solving an instance of the VBPP, leading to an absolute approximation algorithm for the BPPS\@.
In Section~\ref{sec-aptas}, we present an APTAS for the~BPPS, when the number of scenarios is constant. As for practical results, Section~\ref{sec-vns} describes a VNS algorithm for the BPPS and Section~\ref{sec-branchprice} presents a branch-and-price algorithm to solve an exponential model for the BPPS\@.
The evaluation of the proposed VNS and branch-and-price algorithms is performed in Section~\ref{sec-experiments}, from experiments based on the solution of randomly generated instances.
Finally, Section~\ref{sec-conclusions} presents the conclusions and directions for future research.

\section{Formal Definitions\label{sec-related}}
In this section, we provide some definitions to contextualize the contributions of this work. For simplicity, throughout the paper, we denote, for any natural $n$, the set $\{1, \dots, n\}$ simply by $[n]$.

%\paragraph{Bin Packing Problem (BPP)}
%
%In the bin packing problem, we are given a set of items ${\cali= \{1,\dots,n\}}$, each having a size $s_i \in \mathbb{Q}_+$ ($i\in \cali$), and an unlimited number of identical bins of capacity $W \in \mathbb{Q}_+$. A solution $\calb$ of the BPP is a partition of $\cali$ such that the total size of
%the items in each part is not greater than $W$, i.e., for each $B \in \calb$, $\sum_{i \in B} s_i \le W$. The objective is to find a solution that minimizes
%\[
%\vbpp(\calb) = |\calb|.
%\]

\paragraph{Bin Packing Problem with Scenarios (BPPS)}

In the BPPS, we are given a number $d$ of scenarios, a set $\cali = [n]$ of $n$ items, with each $i \in \cali$ having a size $s_i \in \mathbb{Q}_+$ and a set of scenarios $\calk_i \subseteq [d]$, and an unlimited number of identical bins of capacity $W \in \mathbb{Q}_+$.
For each $k \in [d]$, the set of items in scenario~$k$ is denoted by $S_k = \{i \in \cali \mid k \in \calk_i \}$.
A solution $\calb$ of the BPPS is a partition of $\cali$ such that for each part $B \in \calb$ and for each scenario $k \in [d]$, $\sum_{i \in B \cap S_k} s_i \leq W$. The objective of the BPPS is to find a solution that minimizes
\[
\vbps(\calb) = \max_{k \in [d]}|\{B \in \calb: B \cap S_k \neq \emptyset\}|.
\]

The objective corresponds to minimize the number of bins of the worst-case scenario, i.e., the scenario with the largest number of bins. A closely related problem is the VBPP\@.

\paragraph{Vector Bin Packing Problem (VBPP)}

In the VBPP, we are given a number~$d$ of resources, a set $\cali = [n]$ of $n$ items, each $i\in \cali$ having a vector ${s_i \in \mathbb{Q}_+^d}$ of resource consumption, and an unlimited number of identical bins of resource capacity given as a $d$-dimensional vector ${W = (w_1, w_2, \dots, w_d)} \in \mathbb{Q}_+^d$.
A solution~$\calb$ is a partition of $\cali$ such that for each part $B \in \calb$
and each resource~$k \in [d]$, $\sum_{i \in B} s_{ik} \le w_k$.
The objective is to find a solution that minimizes
\[
\vvp(\calb) = |\calb|.
\]

The VBPP is a generalization of the BPP, and thus it is also \APX-hard. 
\citet{woeginger97} showed that there is no APTAS for the two-dimensional version of this problem unless $\PP=\NP$. 
\citet{chekuriK04} gave an asymptotic $O(\ln d)$-approximation for the case where $d$ is a fixed constant. Later on, this result was improved to $1 + \ln d$ by \citet{Bansal2009}. Also, as noted by \citet{Christensen2017}, the APTAS of \citet{VegaL81} implies a $(d + \eps)$-approximation for the VBPP\@.

%\todo[inline]{Scheduling over scenarios: remover, já que não usamos no texto.}
%\paragraph{Scheduling over Scenarios}
%
%An instance is composed of a positive integer~$M$, a set of jobs~${\cali = \{1,
%2, \dots, n\}}$, a processing time $s_i \in \mathbb{Q}_+$ for each $i \in
%\cali$, and a set of scenarios $\calk = \{1, 2, \dots, d\}$. For each $k \in
%\calk$, the set of jobs in scenario $k$ is denoted by $S_k$. A solution is an
%$M$-partition of $\cali$ into sets $B_1, B_2, \dots B_M$. The objective is to
%find a solution that minimizes
%\[
%\max_{k \in \calk}\max_{m \in \{1,\dots,M\}} \sum_{i \in B_m}s_i.
%\]
%
%\citet{feuersteinMSSSSZ14} proposed this problem as well as
%several approximation algorithms and lower bounds on the approximability for the
%version with $M=2$ machines.

For the theoretical results of this paper in Sections~\ref{sec-approx} and~\ref{sec-aptas}, we assume, without loss of generality, that the instances of the BPPS and the VBPP are normalized by a proper scaling of the items, so that the capacity of the bins correspond $1$ on each dimension.

An important concept in both theoretical and practical results for the BPP and related variants is the one of \emph{cutting pattern}: a feasible combination of items in a single bin.
For the BPPS, we represent a cutting pattern as a vector $p=(a_{1p},\dots,a_{np}, b_{1p},\dots,b_{dp})$ of binary coefficients, such that: each coefficient~$a_{ip}$ is equal to $1$ if and only if item $i$ is in pattern $p$, and each coefficient $b_{kp}$ is equal to $1$ if and only if some item of the scenario $k$ is in pattern~$p$. We denote by $\patternset$ the set of all feasible patterns for the BPPS\@.

\section{An Absolute Approximation Algorithm\label{sec-approx}}
% !TeX spellcheck = en_US
% !TeX root = ../paper.tex

In the following, we consider a mapping between the set of instances of the BPPS and a subset of instances of the VBPP\@. Starting with a normalized instance ${I = (d,\cali, \calk, s)}$ of the BPPS, we construct a normalized instance ${I'= (d, \cali, s')}$ of the VBPP\@. For that, let, for each $i \in \cali$ and $k \in [d]$,
\[
s_{ik} =
\begin{cases}
s_i & \mbox{if } i \in S_k,\\
0   & \mbox{otherwise.}
\end{cases}
\]
It is simple to verify that a solution~$\calb$ for~$I$ is a solution for~$I'$, and vice-versa. 

\begin{definition}
A solution $\calb$ is minimal for the BPPS if, for any two bins ${A, B \in \calb}$, there exists a scenario $k$, such that
\[
\sum_{i \in A \cap S_k} s_i + \sum_{i \in B \cap S_k} s_i > 1.
\]
\end{definition}

Notice that the value of $\calb$ for the BPPS is not larger than the value of $\calb$ for the VBPP\@. In the opposite direction, we have Theorem~\ref{lem1}.
\begin{theorem}\label{lem1}
If $\calb$ is a minimal solution for the BPPS, then $\vvp(\calb) \le \sqrt{d}\vbps(\calb)$. 
%$\opt_{VBPP}(\widetilde{\cali}) \leq 2\opt_{BPPS}(\widetilde{\cali})$
\end{theorem}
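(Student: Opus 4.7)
The plan is to run a double-counting argument on the pairs of bins in $\calb$. Set $n := \vvp(\calb) = |\calb|$ and, for each scenario $k \in [d]$, let $n_k := |\{B \in \calb : B \cap S_k \ne \emptyset\}|$, so that $m := \vbps(\calb) = \max_k n_k$. The target inequality $n \le \sqrt{d}\,m$ is equivalent to $n^2 \le d\,m^2$, and I aim to obtain it by summing two complementary bounds.

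First, I would extract a pair-covering property from minimality. For any two distinct bins $A, B \in \calb$, the scenario $k$ witnessing minimality must satisfy $A \cap S_k \ne \emptyset$ and $B \cap S_k \ne \emptyset$, since otherwise one of the two sums in the defining inequality vanishes while the other is bounded by $1$ by feasibility, and the strict inequality fails. Mapping every unordered pair of bins to some witnessing scenario and noting that scenario $k$ can cover at most $\binom{n_k}{2}$ pairs yields
\[
\binom{n}{2} \le \sum_{k=1}^d \binom{n_k}{2} \le d \binom{m}{2},
\]
so that $n(n-1) \le d\,m(m-1)$.

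Second, I would pick up a complementary linear bound. Every bin is non-empty (since $\calb$ is a partition of $\cali$), and, after discarding items whose scenario set is empty (which is WLOG, since such items can be distributed freely without affecting either objective), every item lies in some scenario; consequently each bin touches at least one scenario. A bin--scenario incidence count therefore yields $n \le \sum_{k=1}^d n_k \le d\,m$. Adding this to the previous inequality cancels the linear slack, giving
\[
n^2 = n(n-1) + n \le d\,m(m-1) + d\,m = d\,m^2,
\]
and hence $n \le \sqrt{d}\,m$.

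I do not expect any serious obstacle. The one delicate point is recognising that the minimality hypothesis should be used only weakly, to force the two bins in every pair to share a commonly-touched scenario, rather than trying to exploit the full strict-overflow condition, which would not interact cleanly with a pair-counting argument.
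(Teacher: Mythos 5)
Your proof is correct and follows essentially the same double-counting argument as the paper, which phrases the pair count as edges of a multigraph on the bins but derives the identical chain $n(n-1)\le d\,m(m-1)$, $n\le d\,m$, hence $n^2\le d\,m^2$. Your extraction of the pair-covering property from minimality (via feasibility forcing both bins to touch the witnessing scenario) and your explicit handling of items with empty scenario sets are the only presentational differences.
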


\begin{proof}
% Since $\calb$ is minimal, for any two bins $A, B$ of $\calb$ there is some scenario $k \in [d]$, such that $\sum_{i \in (A \cup B) \cap S_k} s_i > 1$.

We say that two bins $A$ and $B$ are incompatible in a scenario $k$ if both bins, $A$ and $B$, are non-empty in scenario $k$. Let $G$ be a multigraph where each bin represents a vertex, and for each pair of bins~$A$~and~$B$ and scenario~$k$, there is one edge between~$A$ and~$B$ (labeled as $k$) if they are incompatible in~$k$.

For each scenario $k \in [d]$, we define $h_k = |\{ B \in \calb : B \cap S_k \ne \emptyset \}|$, i.e., $h_k$ is the number of bins used by $\calb$ in $k$. Let $h = \max_{k \in [d]} h_k$, and notice that, since, for each scenario, there is an edge for each two used bins, the number of edges in $G$ is
\[
|E(G)| = \sum_{k=1}^d \frac{h_k(h_k-1)}{2} \le d \frac{h(h-1)}{2}.
\]

Let $r$ be the number of bins. Since $\calb$ is minimal, any two bins are incompatible in at least one scenario and $G$ must have at least one edge between any two vertices, i.e., $|E(G)| \ge \frac{r(r-1)}{2}$. Thus,
\[
\frac{r(r-1)}{2} \le d \frac{h(h-1)}{2},
\] and, since $r \le dh$, as each bin is non-empty in at least one scenario, we have that
\[
r^2 \le d h^2 + r - dh  \le d h^2.
\]
The result follows as $r = \vvp(\calb)$ and $h = \vbps(\calb)$.\qedhere
\end{proof}

\begin{corollary}
Suppose there exists an $\alpha$-approximation for the VBPP, then there exists an $\alpha\sqrt{d}$-approximation for the BPPS\@.
\end{corollary}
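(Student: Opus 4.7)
The plan is to exploit the instance-mapping set up just before Theorem~\ref{lem1} together with the bound in Theorem~\ref{lem1}, sandwiching the BPPS objective of the algorithm's output between the VBPP objective (from above, which the approximation controls) and a suitably transformed BPPS optimum (from below, via Theorem~\ref{lem1}).

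Concretely, given a normalized BPPS instance $I=(d,\cali,\calk,s)$, I would first build the corresponding VBPP instance $I'=(d,\cali,s')$ exactly as in the paragraph preceding Theorem~\ref{lem1}; recall that feasible solutions of $I$ and $I'$ coincide set-theoretically. Then I would run the assumed $\alpha$-approximation for the VBPP on $I'$, obtaining a partition $\calb$ with $\vvp(\calb)\le\alpha\cdot\opt_{\textit{VBPP}}(I')$. The returned partition $\calb$ is the algorithm's output for the BPPS, and since each bin is counted at most once per scenario, we trivially have $\vbps(\calb)\le\vvp(\calb)$, which handles one side of the sandwich.

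For the other side, I would relate $\opt_{\textit{VBPP}}(I')$ to $\opt_{\textit{BPPS}}(I)$ through Theorem~\ref{lem1}. Let $\calb^\star$ be any optimal BPPS solution; it is also feasible for $I'$, so $\opt_{\textit{VBPP}}(I')\le\vvp(\calb^\star)$. A brief lemma-style observation (to be made explicit in the write-up) is that we may assume $\calb^\star$ is minimal in the sense of Definition~1: if two bins $A,B$ satisfy $\sum_{i\in A\cap S_k}s_i+\sum_{i\in B\cap S_k}s_i\le 1$ for every scenario~$k$, replacing them by $A\cup B$ preserves BPPS feasibility and cannot increase $\vbps$, since for each scenario the number of non-empty bins either stays the same or drops by one. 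Iterating this merging yields a minimal optimal solution, so by Theorem~\ref{lem1},
\[
\opt_{\textit{VBPP}}(I')\;\le\;\vvp(\calb^\star)\;\le\;\sqrt{d}\,\vbps(\calb^\star)\;=\;\sqrt{d}\,\opt_{\textit{BPPS}}(I).
\]
Chaining the two inequalities gives $\vbps(\calb)\le\alpha\sqrt{d}\,\opt_{\textit{BPPS}}(I)$, as desired.

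The only step that requires care is the implicit ``minimality'' reduction on the BPPS optimum, because Theorem~\ref{lem1} is only stated for minimal solutions; everything else is routine bookkeeping on the definitions of $\vvp$ and $\vbps$ and on the equivalence of feasibility between $I$ and $I'$. Provided that merging argument is stated cleanly, the corollary follows immediately.
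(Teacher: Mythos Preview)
Your proposal is correct and follows essentially the same chain of inequalities as the paper's proof: map $I$ to $I'$, run the VBPP approximation, and use $\vbps(\calb)\le\vvp(\calb)\le\alpha\,\opt_{\textit{VBPP}}(I')\le\alpha\,\vvp(\calb^\star)\le\alpha\sqrt{d}\,\vbps(\calb^\star)$ for a minimal optimal BPPS solution $\calb^\star$. In fact you are slightly more careful than the paper, which simply asserts the existence of a minimal optimal solution without spelling out the merging argument you provide.
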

\begin{proof}
    Consider an instance $I$ of BPPS and the instance $I'$ of VBPP obtained as described above. Also, let $\calb$ be the solution found by the $\alpha$-approximation for instance $I'$, $\calb_V^*$ be an optimal solution for VBPP with instance $I'$ and $\calb_S^*$ be a minimal optimal solution for BPPS with instance $I$. Then, we have \[\vbps(\calb) \leq \vvp(\calb) \leq \alpha \vvp(\calb^*_V) \leq \alpha  \vvp(\calb^*_S) \leq \alpha \sqrt{d} \vbps(\calb^*_S).\qedhere\]
\end{proof}

The bound given by this strategy cannot improve the dependency on $d$, by the following lemma.

\begin{theorem}
For each $d \ge 1$, there exists an instance $I$ of the BPPS with $d$ scenarios and a solution~$\calb$ for $I$, which is minimal for the BPPS, such that ${\vvp(\calb) \geq \frac{\sqrt{d}}{2} \vbps(\calb)}$.
\end{theorem}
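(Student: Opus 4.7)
The plan is to exhibit, for each $d \ge 1$, an instance of the BPPS together with a minimal packing $\calb$ consisting of $r := \lceil \sqrt{d}\,\rceil$ bins in which every scenario activates at most two non-empty bins. With this, $\vvp(\calb) = r$ and $\vbps(\calb) = 2$, yielding the desired ratio $r/2 \ge \sqrt{d}/2$.

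The construction would use the scenarios to separate bins pairwise. Since $r \le \sqrt{d}+1$, we have $\binom{r}{2} \le (\sqrt{d}+1)\sqrt{d}/2 \le d$, so the $\binom{r}{2}$ unordered pairs $\{i,j\} \subseteq [r]$ can be injected into $[d]$ by assigning each pair a distinct label $k_{ij} \in [d]$. For every such pair I would introduce two ``blocking'' items, each of size exactly $1$ and with scenario set $\{k_{ij}\}$, placing one of them in bin $i$ and the other in bin $j$. This simultaneously defines an instance $I$ of the BPPS and a candidate packing $\calb = \{B_1, \dots, B_r\}$.

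Next I would verify feasibility and minimality. For feasibility, scenario $k_{ij}$ activates only the two size-$1$ items associated with the pair $\{i,j\}$, each alone in its bin, so capacity is respected. For minimality, any two bins $B_i$ and $B_j$ together carry items of total size $2 > 1$ in scenario $k_{ij}$, so they cannot be merged. Then I would read off the values: each bin is non-empty so $\vvp(\calb) = r$, and every scenario contains at most two non-empty bins so $\vbps(\calb) = 2$; the inequality $r \ge \sqrt{d}$ holds by choice of $r$. The only work is the arithmetic verification $\binom{r}{2} \le d$ stated above, which is the ``main obstacle'' but is entirely routine; the degenerate case $d = 1$ (where $r = 1$) is handled by taking a single bin with a single item of size $1$, reducing the claim to the trivial $1 \ge 1/2$.
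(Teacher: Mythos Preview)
Your proposal is correct and follows essentially the same approach as the paper: set $r = \lceil \sqrt{d}\rceil$, verify $\binom{r}{2} \le d$, and use one scenario per unordered pair of bins so that any two bins are incompatible while each scenario touches exactly two bins, giving $\vvp(\calb)=r$ and $\vbps(\calb)=2$. The only cosmetic difference is that the paper realises this with $r$ items (one per bin, each of size $1$ and belonging to the $r-1$ scenarios incident to its bin), whereas you use $2\binom{r}{2}$ single-scenario items; both variants yield the same values and the same arithmetic.
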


\begin{proof}
Let $r = \left\lceil \sqrt{d} \right\rceil$.
We construct an instance $I = (d, \cali, \calk, s)$ that contains~$r$ items.

We consider a complete graph $G$ with $r$ vertices. For each vertex $i$ of $G$, create an item~$i$ with size $s_i = 1$, and for each edge $\{i, j\}$ of $G$, create a scenario~$k$ with items $S_k = \{i, j\}$. Notice that each item is in exactly $r - 1$ scenarios, and that the number of scenarios is
\[
|E(G)| = \frac{r(r-1)}{2} = \frac{\lceil \sqrt{d} \rceil(\lceil \sqrt{d}
\rceil-1)}{2} \le \frac{(\sqrt{d} + 1) \sqrt{d}}{2} \le d.
\]

Now, we create a solution $\calb$ with $r$ bins, such that, for each item $i$, there is one bin $B_i = \{i\}$.
We claim that $\calb$ is minimal for the BPPS\@. Indeed, for any two bins $B_i$ and $B_j$, items $i$ and $j$ are contained in a common scenario $k$, with $S_k = \{i, j\}$, and $s_i + s_j > 1$.

Notice that $\vvp(\calb) = r$. Let $h = \vbps(\calb)$, so $h$ is the maximum number of non-empty bins in a given scenario. Since each scenario contains exactly $2$ items, then $h = 2$. Therefore,
\begin{align*}
r = \lceil \sqrt{d} \rceil \ge \frac{\sqrt{d}}{2} 2 = \frac{\sqrt{d}}{2} h.\tag*{\qedhere}
\end{align*}
\end{proof}

Finally, using the $1 + \ln d$-asymptotic approximation algorithm of \citet{Bansal2009} for the VBPP (for constant $d$) and the fact that the APTAS of \citet{VegaL81} implies a $(d + \eps)$-approximation for the VBPP~\cite{Christensen2017}, we have the following result.

\begin{corollary}
    There exists a $(d + \eps)\sqrt{d}$-approximation for the BPPS\@. Also, for any constant~$d$, there exists a $(1 + \ln d)\sqrt{d}$-approximation for the BPPS\@.
\end{corollary}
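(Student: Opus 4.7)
The plan is simply to compose the previous corollary — which converts any $\alpha$-approximation for the VBPP into an $\alpha\sqrt{d}$-approximation for the BPPS via the mapping $I \mapsto I'$ and the minimality argument of Theorem~\ref{lem1} — with two off-the-shelf VBPP results. For the first claim, I would instantiate $\alpha = d + \eps$ using the observation of \citet{Christensen2017} that the APTAS of \citet{VegaL81} implies a $(d+\eps)$-approximation for the VBPP in arbitrary dimension, yielding a $(d+\eps)\sqrt{d}$-approximation for the BPPS. For the second claim, specialised to constant $d$, I would instantiate $\alpha = 1 + \ln d$ using the result of \citet{Bansal2009}, yielding a $(1+\ln d)\sqrt{d}$-approximation for the BPPS.

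The only detail worth spelling out explicitly is that, to invoke Theorem~\ref{lem1}, the solution returned by the VBPP algorithm — which is already feasible for the BPPS since the mapping preserves feasibility in both directions — should be minimal in the BPPS sense. If it is not, one can sweep over pairs of bins and merge any two that are compatible in every scenario; this only decreases the BPPS objective and terminates at a minimal solution, so the approximation guarantee is preserved. After this (linear-time) post-processing, Theorem~\ref{lem1} and the previous corollary apply directly.

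There is no real obstacle here; the argument is a one-line composition. The only subtlety is bookkeeping about the distinction between absolute and asymptotic guarantees: the APTAS-derived $(d+\eps)$ bound and Bansal et al.'s $(1+\ln d)$ bound are asymptotic, so the resulting BPPS guarantees inherit that qualifier in the standard way, which matches how the corollary is used in practice.
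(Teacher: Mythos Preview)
Your proposal is correct and matches the paper's own treatment, which simply states the corollary as an immediate consequence of Corollary~1 combined with the $(d+\eps)$-approximation of \citet{VegaL81}/\citet{Christensen2017} and the asymptotic $(1+\ln d)$-approximation of \citet{Bansal2009}. Your second paragraph is superfluous, though: the previous corollary applies Theorem~\ref{lem1} to a \emph{minimal optimal} BPPS solution rather than to the algorithm's output, so no post-processing of the returned solution is needed.
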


\section{An Asymptotic Polynomial Time Approximation Scheme\label{sec-aptas}}
We present an APTAS for the BPPS when the number of scenarios $d$ is a constant.
First, we observe that, if all items are large and the number of distinct item sizes is bounded by a constant, then the number of valid patterns of items in a bin is also bounded by a constant. It implies that for this restricted case, a polynomial algorithm can be readily obtained by simply enumerating the frequencies of all patterns.

To obtain such a restricted instance, we use the \emph{linear grouping} technique, following~\citet{VegaL81}, by grouping items of similar sizes and creating a map from smaller to larger items. A naive approach does not work, however, since one must take into account the scenarios. Thus, we group items according to their scenarios separately, but we do the mapping simultaneously.

Finally, to solve a general instance, we combine small items into artificial large items. Again, this is done on a per-scenario basis, so as only compatible items are combined. If the small items do not fit exactly into the area of the artificial items, then this change may cause bins to be slightly augmented. The surpassing items are then relocated to new bins greedily, increasing the number of bins by just a fraction of the optimal value.

In the following, we use the notion of type. The \emph{type} of item $i$ is the set of scenarios which contain $i$. The set of all types is denoted by $\calt$.
Also, the value of an optimal solution for an instance $I$ is denoted by $\opt(I)$.

\subsection{Restricted instances}

In this subsection, we consider instances of the BPPS in which the items are large and the number of distinct sizes is bounded by a constant. Precisely, let~$V$ be a set of positive numbers and consider a constant $\eps$, with ${0 < \eps \le 1/4}$ and~$1/\eps$ is an integer. An instance $(d, \cali, \calk, s)$ of the BPPS  is said to be \mbox{\emph{$V$-restricted}} if $\{s_i : i \in \cali\} = V$ and $s_i \ge \eps^2$ for every~$i \in \cali$.

Recall that a bin $B \subseteq \cali$ is feasible if, for any scenario $k \in [d]$, ${\sum_{i\in B \cap S_k}s_i\leq 1}$.
For a type $t \in \calt$ and $v \in V$, let $B_{tv}$ be the subset of items in $B$ with type $t$ and size $v$. Observe that the family of all sets $B_{tv}$ form a partition of $B$. The \emph{pattern} of $B$ is the vector ${(|B_{tv}|)}_{t \in \calt, v \in V}$.

\begin{lemma}\label{lemma:amount_patterns}
If $|V|$ is constant, the number of distinct patterns for a $V$-restricted instance is bounded by a constant.
\end{lemma}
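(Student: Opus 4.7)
The plan is to bound the number of coordinates of a pattern and the range of each coordinate, and then multiply. First, a pattern ${(|B_{tv}|)}_{t \in \calt, v \in V}$ is a vector indexed by $\calt \times V$. Since $\calt \subseteq 2^{[d]}$, we have $|\calt| \le 2^d$, which is a constant because $d$ is constant; and $|V|$ is a constant by assumption. Hence the vector has at most $2^d \cdot |V|$ coordinates.

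Next I would bound each coordinate $|B_{tv}|$. May assume (without loss of generality, by discarding them) that every item belongs to at least one scenario, so every type $t$ appearing in an instance is non-empty. Fix any non-empty type $t$ and size $v \in V$, and pick any scenario $k \in t$. Every item of type $t$ lies in $S_k$, so the feasibility of the bin $B$ in scenario $k$ gives
\[
v\,|B_{tv}| \;\le\; \sum_{v' \in V} v'\,|B_{tv'}| \;\le\; \sum_{i \in B \cap S_k} s_i \;\le\; 1.
\]
Since the instance is $V$-restricted, $v \ge \eps^2$, and thus $|B_{tv}| \le 1/\eps^2$.

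Finally, the number of patterns is at most the number of vectors of length $2^d\,|V|$ whose entries are integers in $\{0, 1, \dots, \lfloor 1/\eps^2 \rfloor\}$, which is bounded by $(1/\eps^2 + 1)^{2^d |V|}$. As $d$, $|V|$ and $\eps$ are constants, this is a constant, as claimed.

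I do not expect any real obstacle here; the only subtlety is the treatment of items with empty scenario set, which would otherwise yield an unbounded coordinate $|B_{\emptyset, v}|$. This is handled by the standing reduction that removes such ``free'' items from any instance, since they impose no feasibility constraint and can be placed into existing bins or a single extra bin at the end without affecting asymptotic arguments.
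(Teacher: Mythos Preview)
Your proof is correct and follows essentially the same approach as the paper: bound the number of coordinates of a pattern and the range of each coordinate, then multiply. The only minor difference is that the paper bounds the total number of items in a feasible bin by $d/\eps^2$ via the inequality $\sum_{i\in B} s_i \le \sum_{k=1}^d \sum_{i\in B\cap S_k} s_i \le d$, whereas you bound each coordinate $|B_{tv}|$ directly by $1/\eps^2$ using a single scenario of~$t$; both routes yield the same constant bound, and your explicit treatment of the empty-type case is a reasonable addition.
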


\begin{proof}
Consider a feasible bin $B$. Since $s_i \ge \eps^2$, for any $i \in B$,
\[
|B| \eps^2 \le \sum_{i \in B} s_i \le \sum_{k=1}^d \sum_{i \in B \cap S_k} s_i \le \sum_{k=1}^d 1 = d,
\]
where the last inequality follows because $B$ is feasible. It means that the number of items in any bin is at most $d/\eps^2$. Since a vector corresponding to a pattern has $|\calt||V|$ elements, the number of patterns is at most ${(1+d/\eps^2)}^{|\calt||V|}$, which is constant since $d$ is constant.\qedhere
\end{proof}

\begin{lemma}\label{lemma:constant_size_opt}
If $|V|$ is constant, then an optimal solution of a $V$-restricted instance can be computed in polynomial time.
\end{lemma}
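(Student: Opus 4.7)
The plan is to reduce the problem to enumerating pattern frequencies. By Lemma~\ref{lemma:amount_patterns}, the set $\calp$ of feasible patterns has constant cardinality. For each pattern $p \in \calp$, introduce a variable $x_p \in \mathbb{Z}_{\ge 0}$ representing the number of bins that use pattern~$p$. Let $n_{tv}$ denote the number of items of type $t$ and size $v$ in the input, and write $p_{tv}$ for the corresponding entry of the vector~$p$. Since items of the same type and size are interchangeable, a tuple $(x_p)_{p \in \calp}$ encodes a valid packing of $\cali$ exactly when the consumption equalities $\sum_{p \in \calp} p_{tv}\, x_p = n_{tv}$ hold for every $t \in \calt$ and $v \in V$, and its objective value is $\max_{k \in [d]} \sum_{p \in \calp} b_{kp}\, x_p$, where $b_{kp} \in \{0,1\}$ indicates whether $p$ contains an item of scenario~$k$ (which is determined by the types appearing in $p$).

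First I would enumerate all tuples $(x_p)_{p \in \calp}$ with $0 \le x_p \le n$, discarding the infeasible ones and returning the feasible tuple that minimizes the objective; a concrete packing is then recovered by assigning items to bins consistently with the chosen multiplicities. The total number of tuples to examine is at most $(n+1)^{|\calp|}$, which is polynomial in $n$ since $|\calp|$ is constant whenever $|V|$ and $d$ are constant (recall $|\calt| \le 2^d$). For each tuple, the feasibility test and the objective evaluation both run in time polynomial in $n$, $d$, and $|\calp|$, so the overall running time is polynomial.

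The only subtle point is to argue that no optimum is missed by restricting attention to pattern-multiplicity tuples. This is immediate from the definitions: any packing is determined, up to permutations of items of the same type and size, by the multiset of patterns of its bins, and both the feasibility conditions and the objective depend only on these multiplicities. Hence the best feasible tuple found by the enumeration yields an optimal solution for the $V$-restricted instance, completing the proof.
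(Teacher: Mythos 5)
Your proof is correct and follows essentially the same route as the paper: enumerate all pattern-multiplicity vectors (the paper calls them configurations), check each against the per-type-and-size item counts, and return the feasible one of minimum objective value. The only difference is that you spell out the feasibility equalities and the objective formula explicitly, which the paper leaves implicit.
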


\begin{proof}
Consider a $V$-restricted instance $I$, and let $M$ be the set of distinct patterns for $I$. Given a solution $\calb$ and a pattern $p \in M$, the number of bins in~$\calb$, which has (a non-empty) pattern $p$, is denoted by~$n_p$.
Clearly $n_p \le |\cali|$ for every $p \in M$, as there are only $|\cali|$ items, and each bin has at least one item.
The configuration of $\calb$ is the vector ${(n_p)}_{p \in M}$.
Thus, the number of possible configurations is bounded by $|\cali|^{|M|}$, which is polynomial since, by Lemma~\ref{lemma:amount_patterns},~$|M|$ is constant.

Notice that, given a vector ${(n_p)}_{p \in M}$, one can either find a solution $\calb$ with this configuration, or decide there is no such solution. Indeed, it is sufficient to count the total number of items of each type and size over all the patterns.
Therefore, one can list all configurations in polynomial time and return the one with the minimum value.\qedhere
\end{proof}

\subsection{An APTAS for large items}

Suppose now that we have an instance $I = (d, \cali, \calk, s)$, such that all items are large, i.e., for each $i \in \cali$, we have $s_i \ge \eps^2$, but the number of distinct sizes is not necessarily bounded by a constant.

\begin{definition}
Given instances $I = (d, \cali, \calk, s)$ and $\bar I = (d, \bar \cali, \bar \calk, \bar s)$, we say that~$I$ dominates~$\bar I$ if there exists a subset $\cali' \subseteq \cali$ and a bijection $f : \cali' \to \bar \cali$, such that, for every $i \in \cali'$, items~$i$ and~$f(i)$ have the same type and $s_i \ge \bar s_{f(i)}$. In this case, we write $I \succcurlyeq \bar I$.
\end{definition}

\begin{lemma}\label{lemma:dominates}
If $I \succcurlyeq \bar I$, then $\opt(I) \ge \opt(\bar I)$.\qed
\end{lemma}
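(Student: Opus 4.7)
The plan is to transport an optimal solution of $I$ to a solution of $\bar I$ via the bijection $f$, losing nothing in the scenario-wise bin count. First I would take an optimal solution $\calb$ for $I$ with $\vbps(\calb) = \opt(I)$, and for each bin $B \in \calb$ define
\[
\bar B = \{ f(i) : i \in B \cap \cali' \}.
\]
Since $f : \cali' \to \bar\cali$ is a bijection, the family $\bar \calb = \{\bar B : B \in \calb\}$ (discarding any empty sets) is a partition of $\bar \cali$, hence a candidate solution for $\bar I$.

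Next I would check feasibility. Fix any bin $\bar B$ and any scenario $k \in [d]$. Because items $i$ and $f(i)$ have the same type, $k \in \bar \calk_{f(i)}$ iff $k \in \calk_i$, so $\bar B \cap \bar S_k = \{f(i) : i \in B \cap \cali' \cap S_k\}$. Using $\bar s_{f(i)} \le s_i$, I would then bound
\[
\sum_{j \in \bar B \cap \bar S_k} \bar s_j \;=\; \sum_{i \in B \cap \cali' \cap S_k} \bar s_{f(i)} \;\le\; \sum_{i \in B \cap S_k} s_i \;\le\; 1,
\]
so $\bar B$ is feasible for $\bar I$.

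Finally, I would compare the objective values scenario by scenario. If $\bar B \cap \bar S_k \neq \emptyset$, then the display above shows there exists $i \in B \cap S_k$, so the originating bin $B$ is also non-empty in scenario $k$. The map $B \mapsto \bar B$ is therefore an injection from the bins of $\bar\calb$ that are non-empty in scenario $k$ into the bins of $\calb$ non-empty in scenario $k$, giving $\vbps(\bar\calb) \le \vbps(\calb) = \opt(I)$, and hence $\opt(\bar I) \le \vbps(\bar\calb) \le \opt(I)$.

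This argument is almost entirely bookkeeping; the only point requiring care is ensuring that the type-preservation condition in the definition of domination is actually used to transfer non-emptiness in each scenario in both the feasibility step and the objective comparison. I do not anticipate a genuine obstacle.
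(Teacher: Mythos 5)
Your argument is correct and is precisely the routine construction the paper leaves implicit (the lemma is stated with its proof omitted as immediate): push an optimal packing of $I$ through the bijection $f$, using type preservation to match scenario membership and $s_i \ge \bar s_{f(i)}$ for feasibility, so the per-scenario bin counts do not increase. No gaps.
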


In the following, we create an instance $\bar{I} = (d, \bar\cali, \bar \calk, \bar s)$ with $I \succcurlyeq \bar I$, such that all items of~$\bar I$ are large and the number of distinct sizes is bounded by a constant.

First, for each $t \in \calt$, let $\cali_t$ be the set of items of type $t$ and let $m = \frac{2^d}{\eps^3} - 1$. We consider the set $\cali_t$ sorted in decreasing order of size, and build $m + 1$ groups of consecutive items, obtaining a partition $\{G^0_t, G^1_t,\dots,G^{m}_t\}$, where the first $m$ groups have size $\lceil |\cali_t|/(m + 1) \rceil$, and the last group either has the same size, or is smaller.

Now, for each $t \in \calt$ and $\ell \in \{1,2,\dots,m\}$, we create a set $\bar G^\ell_t$ as follows: for each item $i \in G^\ell_t$, add an item to $\bar G^\ell_t$ with the same set of scenarios $t$, and with size equal to the size of the largest item in $G^\ell_t$.
Let $\bar \cali$ be the union of all sets $\bar G^\ell_t$, and $\bar I$ be the instance induced by~$\bar\cali$.

\begin{lemma}
$I \succcurlyeq \bar I$.
\end{lemma}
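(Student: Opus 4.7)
The plan is to exhibit an explicit bijection $f$ that ``shifts'' groups by one position. Concretely, I set $\cali' = \bigcup_{t \in \calt}\bigcup_{\ell = 0}^{m-1} G^\ell_t$, i.e., I drop only the last group $G^m_t$ for each type (which is the group whose size may be smaller). For each type $t$ and each $\ell \in \{1,\dots,m\}$, I define $f$ as an arbitrary bijection from $G^{\ell-1}_t$ onto $\bar G^\ell_t$, and take $f$ to be the union of these bijections over all $t$ and $\ell$.

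First I would verify that the bijections on each $(t,\ell)$ pair are well defined. Since $\ell-1 \in \{0,\dots,m-1\}$, the group $G^{\ell-1}_t$ is among the first $m$ groups and hence has size $\lceil |\cali_t|/(m+1)\rceil$; the same holds for $G^\ell_t$ because $\ell \le m$, and by construction $|\bar G^\ell_t| = |G^\ell_t|$. Therefore $|G^{\ell-1}_t| = |\bar G^\ell_t|$ and a bijection exists. Taking the disjoint union over all $t$ and $\ell$ yields a bijection $f: \cali' \to \bar\cali$, because the $\bar G^\ell_t$ partition $\bar\cali$ and the $G^{\ell-1}_t$ with $\ell \in \{1,\dots,m\}$ partition $\cali'$.

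Next I would check the two conditions required by the definition of $\succcurlyeq$. For any $i \in G^{\ell-1}_t$, the image $f(i) \in \bar G^\ell_t$ is constructed to have type $t$, which matches the type of $i$, so the type condition holds. For the size condition, recall that $\cali_t$ is sorted in decreasing order of size and the groups are formed by consecutive items; thus every item of $G^{\ell-1}_t$ is at least as large as every item of $G^\ell_t$. In particular, $s_i \ge \max\{s_j : j \in G^\ell_t\} = \bar s_{f(i)}$, as desired.

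There is really no obstacle here beyond bookkeeping: the entire argument hinges on the fact that the \emph{first $m$} groups in each type all have the same cardinality, so shifting by one index leaves a well-defined bijection; the last group $G^m_t$ is the only one that may be smaller, and it is exactly the group we discard from $\cali$ (it has no counterpart in $\bar\cali$). Once the bijection is set up, the size inequality is immediate from the sort order and the fact that $\bar G^\ell_t$ inherits the size of the \emph{largest} item of $G^\ell_t$.
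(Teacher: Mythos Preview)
Your approach is the same shift-by-one-index argument as the paper's, but there is a small cardinality slip that breaks the bijection as you have stated it. You assert that ``the same holds for $G^\ell_t$ because $\ell \le m$,'' i.e., that $|G^\ell_t| = \lceil |\cali_t|/(m+1)\rceil$ for every $\ell \in \{1,\dots,m\}$. This fails at $\ell = m$: the groups are $G^0_t,\dots,G^m_t$, the \emph{first $m$} of them are $G^0_t,\dots,G^{m-1}_t$, and the paper explicitly allows the last group $G^m_t$ to be strictly smaller. Consequently $|\bar G^m_t| = |G^m_t|$ may be strictly less than $|G^{m-1}_t|$, so your map from $G^{m-1}_t$ onto $\bar G^m_t$ need not be a bijection, and your choice $\cali' = \bigcup_t\bigcup_{\ell=0}^{m-1}G^\ell_t$ may have more elements than $\bar\cali$.

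The repair is immediate and is exactly what the paper does: use only the inequality $|G^{\ell-1}_t| \ge |G^\ell_t| = |\bar G^\ell_t|$ and take $f$ to be a bijection from a \emph{subset} of $G^{\ell-1}_t$ onto $\bar G^\ell_t$ (any subset works, since every element of $G^{\ell-1}_t$ is at least as large as the maximum of $G^\ell_t$). With that adjustment your argument is correct and coincides with the paper's proof.
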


\begin{proof}
For $\ell \in \{1, 2, \dots, m\}$, as $|G_t^{\ell-1}| \ge |G_t^\ell| = |\bar G_t^{\ell}|$, we can find a bijection~$f$ between a subset of $G_t^{\ell-1}$ and $\bar G_t^{\ell}$ for $\ell \in \{1, 2, \dots, m\}$ such that $i$ and $f(i)$ has the same type. Finally, as $s_i \geq s_j$ for $i \in G_t^{\ell-1}$ and $j \in G_t^{\ell}$ and $\bar s_{f(i)}$ is the maximum size of an item in  $G_t^{\ell}$, we have that $s_i \geq \bar s_{f(i)}$.\qedhere
\end{proof}

For a type $t \in \calt$, the group of the largest items is $G^0_t$. For each item $i$ in this group, we pack it into a separate bin $\{i\}$. By joining all these bins, we obtain a packing $\calp^0$.

\begin{lemma}\label{lemma:g0}
$\vbps(\calp^0) \le \eps \opt(I) + 2^d$.
\end{lemma}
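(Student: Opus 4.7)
The plan is to bound, for each scenario $k$, the number of bins of $\calp^0$ that are non-empty in $k$, and then take the maximum.

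First I would observe that $\calp^0$ consists of singleton bins $\{i\}$ for $i \in \bigcup_{t \in \calt} G^0_t$, so a bin $\{i\}$ intersects $S_k$ exactly when $k$ belongs to the type $t$ of $i$. Consequently, for each $k \in [d]$,
\[
|\{B \in \calp^0 : B \cap S_k \ne \emptyset\}| \;=\; \sum_{t \in \calt : k \in t} |G^0_t|.
\]
Since $|G^0_t| = \lceil |\cali_t|/(m+1) \rceil \le |\cali_t|\,\eps^3/2^d + 1$ and the number of types containing a fixed $k$ is at most $2^{d-1}$, this yields
\[
\sum_{t \ni k} |G^0_t| \;\le\; \frac{\eps^3}{2^d} \sum_{t \ni k} |\cali_t| + 2^{d-1}.
\]

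Next I would use that the sets $\{\cali_t : t \ni k\}$ partition $S_k$, so $\sum_{t \ni k}|\cali_t| = |S_k|$. Now I invoke the largeness assumption: every $i \in \cali$ has $s_i \ge \eps^2$, and in the optimal solution every bin has capacity $1$ per scenario, so
\[
\opt(I) \;\ge\; \sum_{i \in S_k} s_i \;\ge\; \eps^2\, |S_k|,
\]
which gives $|S_k| \le \opt(I)/\eps^2$. Plugging back,
\[
\sum_{t \ni k}|G^0_t| \;\le\; \frac{\eps^3}{2^d}\cdot\frac{\opt(I)}{\eps^2} + 2^{d-1} \;=\; \frac{\eps\,\opt(I)}{2^d} + 2^{d-1}.
\]
Taking the maximum over $k$ gives $\vbps(\calp^0) \le \eps\,\opt(I)/2^d + 2^{d-1} \le \eps\,\opt(I) + 2^d$, as desired.

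No step is really an obstacle; the only thing requiring care is the book-keeping between items, types, and scenarios: recognising that items in $G^0_t$ contribute to scenario $k$ only through types $t$ containing $k$, and that summing $|\cali_t|$ over such types reproduces exactly $|S_k|$. The constant $m+1 = 2^d/\eps^3$ is tuned precisely so that, after multiplying by the $1/\eps^2$ loss from the size lower bound $s_i \ge \eps^2$, the leading term becomes $\eps\,\opt(I)/2^d$, dominating the additive $2^{d-1}$ term comfortably inside the claimed bound.
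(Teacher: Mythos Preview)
Your argument is correct, and in fact tighter than the paper's. The paper does not bound the scenario-by-scenario count at all: it simply upper-bounds $\vbps(\calp^0)$ by the \emph{total} number of bins $\sum_{t\in\calt}|G^0_t|$, and then shows, for each type $t$ separately, that $|\cali_t|\le \opt(I)/\eps^2$ by picking one scenario $k\in t$ and using $\eps^2|\cali_t|\le\sum_{i\in\cali_t}s_i\le\sum_{B\in\calp_k}\sum_{i\in B\cap S_k}s_i\le|\calp_k|\le\opt(I)$. Summing $\lceil\eps^3|\cali_t|/2^d\rceil$ over all (at most $2^d$) types yields the stated $\eps\,\opt(I)+2^d$. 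Your route instead fixes the scenario first, aggregates only the types containing it, and observes that those $\cali_t$ partition $S_k$; this saves both a factor $2^d$ in the linear term and a factor $2$ in the additive term, giving $\eps\,\opt(I)/2^d+2^{d-1}$, which you then relax to the claimed bound. Both approaches use the same size lower bound $s_i\ge\eps^2$ and the same area argument against $\opt(I)$; your bookkeeping is just finer about which types actually contribute to a given scenario.
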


\begin{proof}
Let $t \in \calt$ and fix an arbitrary scenario $k$ of $t$.
We consider an optimal solution for~$I$, and let $\calp_k$ be the set of bins used in scenario $k$ in this solution. Therefore, $|\calp_k| \le \opt(I)$. Observe that, since $k$ belongs to $t$, the set of items included in bins of $\calp_k$ must contain $\cali_t$. As, for each $i \in \cali_t$, $s_i \ge \eps^2$,
\[
\eps^2 |\cali_t| \le \sum_{i \in \cali_t} s_i \le
\sum_{B \in \calp_k} \sum_{i \in B \cap S_k} s_i \le
\sum_{B \in \calp_k} 1 = |\calp_k| \le \opt(I).
\]

This implies that the value of $\calp^0$ can be bounded by
\[
\sum_{t \in \calt} |G^0_t| =
\sum_{t \in \calt} \left\lceil \frac{\eps^3}{2^d}\, |\cali_t| \right\rceil \le
2^d + \sum_{t \in \calt} \frac{\eps \opt(I)}{2^d} \leq \eps \opt(I) + 2^d.\tag*{\qedhere}
\]
\end{proof}

To obtain a packing of the remaining items, we solve instance~$\bar I$. For this, let~$V$ be the set of distinct sizes in $\bar I$. For each $t \in \calt$, we created $m$ groups, where each group has items of the same size. Therefore, ${|V| \le 2^d m}$.
This means that $\bar I$ is a $V$-restricted instance for $|V|$ bounded by a constant. Using Lemma~\ref{lemma:constant_size_opt}, we obtain a solution~$\bar\calp^1$ for $\bar I$ in polynomial time.
Since $I \succcurlyeq \bar I$, we can obtain a solution $\calp^0 \cup \calp^1$ for $I$, where~$\calp^1$ is obtained from $\bar \calp^1$ by replacing items of $\bar G^\ell_t$ by items of $G^\ell_t$. Let $I^1$ be the instance obtained from the items of $\calp^1$. Observe that $\calp^1$ is feasible, since we replaced items of larger size which appear in the same set of scenarios, thus for each scenario, the capacity of each bin remains respected.

\begin{lemma}\label{lemma:g1t}
$\vbps(\calp^1) \le \opt(I)$.
\end{lemma}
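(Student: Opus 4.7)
The plan is to chain three observations together: that $\bar\calp^1$ is optimal for $\bar I$, that the substitution used to build $\calp^1$ preserves both feasibility and the scenarios present in each bin, and that Lemma~\ref{lemma:dominates} bounds $\opt(\bar I)$ by $\opt(I)$.

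First, I would recall that $\bar \calp^1$ was produced by the polynomial algorithm of Lemma~\ref{lemma:constant_size_opt} applied to the $V$-restricted instance $\bar I$, so $\vbps(\bar \calp^1) = \opt(\bar I)$.

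Next, I would verify that $\vbps(\calp^1) = \vbps(\bar \calp^1)$. The construction of $\calp^1$ replaces, inside each bin of $\bar \calp^1$, every item of $\bar G^\ell_t$ by a distinct item of $G^\ell_t$; this is possible because $|\bar G^\ell_t| = |G^\ell_t|$ by construction, and it yields a feasible solution for $I$ because items of $G^\ell_t$ have size at most the largest element of $G^\ell_t$, which is precisely the common size of items in $\bar G^\ell_t$, so every per-scenario load $\sum_{i \in B \cap S_k} s_i$ can only decrease. Crucially, each replacement keeps the type $t$, so the set of scenarios represented in each bin is unchanged. Consequently, the partition $\calp^1$ of $\cali \setminus \bigcup_{t} G^0_t$ has the same number of bins as $\bar\calp^1$ and the same indicator $|\{B : B \cap S_k \neq \emptyset\}|$ for every $k \in [d]$, giving the equality of $\vbps$ values.

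Finally, I would invoke Lemma~\ref{lemma:dominates}: since $I \succcurlyeq \bar I$ was established earlier, we have $\opt(\bar I) \le \opt(I)$. Putting everything together,
\[
\vbps(\calp^1) = \vbps(\bar \calp^1) = \opt(\bar I) \le \opt(I),
\]
as required. There is no serious obstacle here; the only subtlety worth stating carefully is that the item substitution preserves both feasibility (because the new items are no larger) and the per-bin scenario set (because types coincide), so that the $\vbps$ value is genuinely preserved rather than merely bounded.
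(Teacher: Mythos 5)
Your proposal is correct and follows essentially the same route as the paper: establish $\vbps(\calp^1) = \vbps(\bar\calp^1)$ via the size- and type-preserving substitution, use optimality of $\bar\calp^1$ for $\bar I$, and conclude with Lemma~\ref{lemma:dominates} applied to $I \succcurlyeq \bar I$. If anything, your write-up is slightly cleaner than the paper's, which detours through the intermediate instance $I^1$ where writing $\opt(\bar I)$ directly (as you do) suffices.
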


\begin{proof}
Note that the number of unused bins in each scenario is the same for $\calp^1$ and $\bar \calp^1$, thus $\vbps(\calp^1) = \vbps(\bar\calp^1)$. Since, $\bar\calp^1$ is optimal for $\bar I$, we have $\vbps(\bar\calp^1) = \opt(I^1)$. Now, using Lemma~\ref{lemma:dominates} and the fact that $I \succcurlyeq \bar I$, we have $\opt(I^1) \le \opt(I)$.\qedhere
\end{proof}

Combining the previous results, one obtains a APTAS for instances with large items only.

\begin{lemma}\label{lemma:large_items}
Suppose for every $i \in \cali$, $s_i \ge \eps^2$. Then one can find in polynomial time a packing $\calp$ of $\cali$ with $\vbps(\calp) \le (1 + \eps) \opt(I) + 2^d$.
\end{lemma}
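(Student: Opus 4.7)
The plan is to assemble the lemma directly from the pieces already built: define $\calp = \calp^0 \cup \calp^1$, where $\calp^0$ is the one-item-per-bin packing of the groups $G^0_t$ (the largest $1/(m+1)$-fraction of each type) and $\calp^1$ is the packing obtained by lifting an optimal solution of the $V$-restricted instance $\bar I$ back to the original items. Together, these cover every item in $\cali$: $\calp^0$ handles the $G^0_t$ groups across all types, and $\calp^1$ handles the remaining items via the bijection lifting used to define $\calp^1$ from $\bar\calp^1$.

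Next I would verify subadditivity of $\vbps$ under disjoint union. For any scenario $k$, the number of bins of $\calp^0 \cup \calp^1$ that meet $S_k$ is at most the corresponding count in $\calp^0$ plus the corresponding count in $\calp^1$; taking the max over $k$ gives
\[
\vbps(\calp^0 \cup \calp^1) \le \vbps(\calp^0) + \vbps(\calp^1).
\]
Substituting Lemma~\ref{lemma:g0} ($\vbps(\calp^0) \le \eps\opt(I) + 2^d$) and Lemma~\ref{lemma:g1t} ($\vbps(\calp^1) \le \opt(I)$) yields the claimed bound $(1+\eps)\opt(I) + 2^d$.

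For polynomial running time, the construction of $\calp^0$ is trivial (sort each $\cali_t$ and peel off the first $\lceil |\cali_t|/(m+1)\rceil$ items into singleton bins, with $m = 2^d/\eps^3 - 1$ constant). The construction of $\calp^1$ consists of building $\bar I$, solving it with Lemma~\ref{lemma:constant_size_opt} in polynomial time (since $|V| \le 2^d m$ is constant, so the restricted instance has a constant number of distinct sizes), and then lifting $\bar\calp^1$ back to $\calp^1$ through the size-preserving, type-preserving replacement described just before Lemma~\ref{lemma:g1t}. Each step is polynomial, so the overall algorithm is polynomial.

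I do not expect any real obstacle: the only subtle point is the subadditivity step, which is immediate once one recalls that $\vbps$ counts, per scenario, the number of bins containing at least one item of that scenario, so disjoint packings can only add their per-scenario counts. Everything else is bookkeeping that composes the two previous lemmas.
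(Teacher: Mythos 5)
Your proposal matches the paper's proof: both define $\calp = \calp^0 \cup \calp^1$, observe that it covers all of $\cali$, use the per-scenario subadditivity $\vbps(\calp^0 \cup \calp^1) \le \vbps(\calp^0) + \vbps(\calp^1)$, and plug in Lemmas~\ref{lemma:g0} and~\ref{lemma:g1t}. Your added remarks on subadditivity and polynomial running time are correct elaborations of steps the paper leaves implicit.
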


\begin{proof}
Define $\calp = \calp^0 \cup \calp^1$. Notice that $\calp$ contains every item of $\cali$, and thus it is feasible. Using lemmas~\ref{lemma:g0}~and~\ref{lemma:g1t}, \[\vbps(\calp) \le \vbps(\calp^0) + \vbps(\calp^1) \le  \eps \opt(I) + 2^d + \opt(I).\qedhere\]
\end{proof}

\subsection{An APTAS for the general case}

In the general case of the BPPS, an instance $I = (d, \cali, \calk, s)$ may contain large and small items.
Let $\call = \{i \in \cali : s_i \ge \eps^2 \}$ be the set of large items, and $\cals = \cali \setminus \call$ be the set of small items.

To obtain a packing of $\call \cup \cals$, we first replace $\cals$ by the set of large items,~$\hat \cals$, which we define as follows.  For each type $t \in \calt$, let $\cals_t$ be the set of small items with type $t$. Now, let $\hat\cals_t$ be a set of $\lceil \sum_{i \in \cals_t} s_i / (\eps - \eps^2) \rceil$ items, such that each $j \in \hat\cals_t$ has size $\hat s_j = \eps$ and is of type~$t$. The set $\hat \cals$ is the union of sets $\hat \cals_t$ for every type $t$.

Thus, we create an instance $\hat I = (d, \hat \cali, \hat \calk, \hat s)$ whose set of items is $\hat \cali = \call \cup \hat\cals$, each of which has size at least $\eps^2$. Lemma~\ref{lemma:modopt} compares the optimal value of $I$ and $\hat I$.

\begin{lemma}\label{lemma:modopt}
$\opt(\hat I) \le (1 + 2^{d+1} (d + 1)\eps)\opt(I) + 1$.
\end{lemma}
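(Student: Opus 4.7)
The plan is to take an optimal solution $\calp^*$ of $I$ and modify it into a feasible packing $\hat\calp$ of $\hat I$ of value bounded as claimed. Starting from $\calp^*$ (which uses at most $\opt(I)$ bins per scenario), I will remove every small item and, for each bin $B$ and type $t\in\calt$, record $\tau_{B,t}=\sum_{i\in B\cap\cals_t}s_i$; by construction $\sum_{t\ni k}\tau_{B,t}=\tau_{B,k}$ is the small-item mass removed from $B$ in scenario~$k$.

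Next, into the same bin $B$ I insert $\lfloor\tau_{B,t}/\eps\rfloor$ artificial items of type~$t$ for every type~$t$. Feasibility in a scenario~$k$ follows from
\[
\eps\sum_{t\ni k}\lfloor\tau_{B,t}/\eps\rfloor\;\le\;\sum_{t\ni k}\tau_{B,t}\;=\;\tau_{B,k},
\]
so no bin's capacity in scenario~$k$ is exceeded. The remaining (``excess'') artificial items of type~$t$, $e_t=N_t-\sum_{B}\lfloor\tau_{B,t}/\eps\rfloor$ with $N_t=|\hat\cals_t|=\lceil\sigma_t/(\eps-\eps^2)\rceil$ and $\sigma_t=\sum_{i\in\cals_t}s_i$, are packed into fresh bins that contain only items of type~$t$, at $1/\eps$ items per bin (permitted because $1/\eps$ is an integer). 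Each such extra bin is non-empty only in scenarios $k\in t$ and there are at most $\lceil e_t\eps\rceil\le e_t\eps+1$ of them per type.

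The main estimate is on $e_t$. Using $\lfloor x\rfloor\ge x-1$ summed over bins with $\tau_{B,t}>0$, I get $\sum_B\lfloor\tau_{B,t}/\eps\rfloor\ge\sigma_t/\eps-r_t$, where $r_t$ is the number of bins of $\calp^*$ containing at least one small item of type~$t$. Combining with the algebraic identity $\frac{1}{\eps-\eps^2}=\frac{1}{\eps}+\frac{1}{1-\eps}$ and $N_t\le\sigma_t/(\eps-\eps^2)+1$ yields $e_t\le\sigma_t/(1-\eps)+1+r_t$. For a fixed scenario~$k$ I then sum $e_t\eps+1$ over the types $t\ni k$ and invoke three controls: $\sum_{t\ni k}\sigma_t\le\opt(I)$ (all items of $S_k$ fit in the bins of $\calp^*$ used in scenario~$k$), $|\{t:t\ni k\}|\le2^{d-1}$, and $\sum_{t\ni k}r_t\le2^{d-1}\opt(I)$ by double counting the bins non-empty in scenario~$k$ across the types. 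Coupled with $\eps\le1/4$ and $1/(1-\eps)\le1+2\eps$, this produces a bound of the announced form.

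The main obstacle I expect is the clean aggregation of the two layers of rounding error, namely the per-bin floor in the first placement step (responsible for the $r_t$ term) and the per-type ceiling in the overflow step (responsible for the $|\{t:t\ni k\}|$ term). Handling both simultaneously via the type-to-bin double counting is precisely what produces the exponential-in-$d$ multiplicative factor in front of $\eps\opt(I)$ and the small additive constant in the lemma statement.
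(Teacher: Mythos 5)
Your construction is sound up to the final aggregation step, and it genuinely differs from the paper's: you insert $\lfloor\tau_{B,t}/\eps\rfloor$ artificial items per bin and type (floors, so the original bins remain feasible) and push the shortfall $e_t$ into fresh \emph{per-type} bins, whereas the paper inserts $\lceil\cdot\rceil$ items per bin (deliberately overfilling) and then relocates the surplus \emph{items} of all types together into shared new bins. The difference is fatal for the stated constant. Opening $\lceil e_t\eps\rceil$ fresh bins for each type costs one whole bin per type from the ceiling, and a fixed scenario $k$ sees every type $t\ni k$ --- up to $2^{d-1}$ of them --- so your bound has the shape $(1+c_d\,\eps)\opt(I)+2^{d-1}(1+\eps)$. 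The term $|\{t: t\ni k\}|\le 2^{d-1}$ is not multiplied by $\eps$ and cannot be absorbed into the factor $1+2^{d+1}(d+1)\eps$ when $\eps$ is small (as it will be in Theorem~\ref{teo:aptas}, where $\eps\le\eps'/(3\cdot 2^{d+1}(d+1))$) and $\opt(I)$ is small. So you do not obtain the additive constant $1$ claimed by the lemma; your conclusion that the three controls ``produce a bound of the announced form'' is where the argument breaks.

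The paper avoids this by keeping all rounding losses at the level of \emph{items} rather than bins: the surplus in each bin and type is at most $2+2\sum_{i\in B_t}s_i$ items, hence at most $2^{d+1}(d+1)\opt(I)$ items in total, each costing only $\eps$ of a new bin, and a \emph{single} global ceiling $\bigl\lceil 2^{d+1}(d+1)\eps\opt(I)\bigr\rceil$ yields the lone $+1$. The price is that the shared new bins may be non-empty in every scenario, but that is harmless because their total count is already $O_d(\eps)\opt(I)+1$. Your argument can be repaired in the same spirit by pooling the $\sum_t e_t$ leftover items into common bins of $1/\eps$ items each; the per-type $+1$ inside $e_t\le\sigma_t/(1-\eps)+1+r_t$ then contributes only $2^d\eps\le 2^d\eps\opt(I)$, which is $\eps$-scaled and absorbable. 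As written, however, your proof establishes a strictly weaker inequality --- still enough to salvage the APTAS with a larger additive constant, but not the lemma as stated.
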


\begin{proof}
Consider an optimal solution $\calp^*$ for $I$. For each bin $B \in \calp^*$ and type $t \in \calt$, we define $B_t = B \cap \cals_t$, i.e., $B_t$ is the set of small items with type $t$. Then, we replace the items in $B_t$ by $\lceil \sum_{i \in B_t} s_i / (\eps - \eps^2)\rceil$ new items of size $\eps$, and with the same scenarios of~$t$. We call the modified packing by $\calp'$.

Observe that $\calp'$ may be infeasible, since new items may surpass the bins' capacity. We can obtain a feasible solution $\calp''$ by relocating some created items from $\calp'$ to new bins. For each type~$t$ and each bin~$B \in \calp'$, the number of items needed to be picked is the additional area divided by $\eps$ rounded up, that is, 
\begin{align*}
\left\lceil
  \frac{
    \left\lceil 
      \sum_{i \in B_t} \frac{s_i}{\eps - \eps^2}
    \right\rceil \eps 
    - \sum_{i \in B_t} s_i
  }{
    \eps
  }
\right\rceil
& \leq
1 + 
\frac{
  \eps + 
  \eps \sum_{i \in B_t} \frac{s_i}{\eps - \eps^2}
  - \sum_{i \in B_t} s_i
}{
  \eps
}\\
& =
2 + 
\frac{1}{1 - \eps}
\sum_{i \in B_t} s_i \\
& \leq
2 + 2\sum_{i \in B_t} s_i
\end{align*}
where the last inequality follows from the fact that $\eps \leq 1/2$. Let $R$ be the set of all picked items. Observe that ${|\calp^*| \le d \opt(I)}$, since each bin is not empty for at least one scenario. Thus, as there are $2^d$ types and ${\sum_{B \in \calp^*} \sum_{i \in B_t} s_i \leq \opt(I)}$, the number of items in $R$ is at most
\begin{align*}
\sum_{t \in \calt} \sum_{B \in \calp^*}  \left(2 + 2\sum_{i \in B_t} s_i\right) 
& = 2^{d+1} |\calp^*| + 2^{d+1} \opt(I)\\
& = 2^{d+1} d \opt(I) + 2^{d+1} \opt(I)\\
& = 2^{d+1} (d + 1) \opt(I).
\end{align*}

Since $1/\eps$ is an integer, one can rearrange all items of $R$ into at most \[\left\lceil \frac{2^{d+1} (d + 1) \opt(I)}{1/{\eps}}\right\rceil\] new bins of unit capacity. Let $\calr$ be this set of new bins, and obtain a feasible packing $\calp''$ of $I$, by making a copy of $\calp'$, removing the items of $R$, and adding the bins of $\calr$.

Notice that $\vbps(\calp') = \vbps(\calp^*) = \opt(I)$. Using these facts, the value of $\calp''$ can be estimated as
\begin{align*}
\vbps(\calp'')
  &\le \vbps(\calp')  + |\calr|\\
  &= \opt(I) + \left\lceil \frac{2^{d+1} (d + 1) \opt(I)}{1/{\eps}}\right\rceil\\
  &= \opt(I) + 2^{d+1} (d + 1)\eps \opt(I) + 1.
\end{align*}

We notice that $\calp''$ contains all large items $\call$ and, for each type $t$, a certain number of items with scenarios~$t$ and size~$\eps$. Thus, to obtain a packing for $\hat \cali$ from $\calp''$, it is sufficient to show that for each $t$, the number of created items corresponding to $t$ is not smaller than~$|\hat \cals_t|$. Indeed, for a given $t$, the number of created items is 
\[
\sum_{B \in \calp^*}  \left\lceil \sum_{i \in B_t} s_i / \eps \right\rceil \ge
\left\lceil  \sum_{i \in \cals_t}  s_i / \eps \right\rceil = |\hat \cals_t|.
\]

To conclude the proof, we create a packing~$\hat \calp$ for~$\hat \cali$ by removing the exceeding items of~$\calp''$. Since removing such items can only decrease the objective value, ${\vbps(\hat \calp) \le \vbps(\calp'')}$.\qedhere
\end{proof}

Finally, to obtain a packing for $I$, one can first obtain a packing of~$\hat I$, then replace items from $\hat \cals_t$ by items in $\cals_t$. This leads to Theorem~\ref{teo:aptas}.

\begin{theorem}\label{teo:aptas}
For every constant $\eps' > 0$, one can find in polynomial time a packing $\calp$ of $\cali$ such that $\vbps(\calp) \le (1 + \eps') \opt(I) + 2 + 2^d$. 
\end{theorem}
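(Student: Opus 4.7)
The plan is to compose the two reductions developed in the preceding subsection: first reduce $I$ to the large-item instance $\hat I$ via Lemma~\ref{lemma:modopt}, then apply the APTAS for large items (Lemma~\ref{lemma:large_items}) to $\hat I$, and finally translate the resulting packing of $\hat \cali$ back into a packing of $\cali$ by replacing, for each type $t$, the artificial items of $\hat \cals_t$ by the genuine small items of $\cals_t$.

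More concretely, given $\eps' > 0$ I would first fix $\eps \in (0, 1/4]$ with $1/\eps$ an integer, small enough that
\[
\eps + 2^{d+1}(d+1)\eps + 2^{d+1}(d+1)\eps^2 \le \eps'.
\]
Applying Lemma~\ref{lemma:large_items} to $\hat I$ produces, in polynomial time, a packing $\hat \calp$ of $\hat \cali$ with $\vbps(\hat \calp) \le (1 + \eps) \opt(\hat I) + 2^d$; combined with Lemma~\ref{lemma:modopt} this gives
\[
\vbps(\hat \calp) \le (1+\eps)\bigl[(1 + 2^{d+1}(d+1)\eps)\opt(I) + 1\bigr] + 2^d.
\]

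The main step is to produce $\calp$ from $\hat \calp$ without opening any new bin. For each type $t \in \calt$, I would remove the artificial items of $\hat \cals_t$ from the bins of $\hat \calp$, obtaining a collection of per-bin slots of total capacity $|\hat \cals_t|\eps$, and then repack the items of $\cals_t$ into these slots using First-Fit. Since each small item has size strictly less than $\eps^2$ while each slot has capacity at least $\eps$, a failure of First-Fit would force every used slot to have residual capacity below $\eps^2$, so the packed area so far would strictly exceed
\[
|\hat \cals_t|(\eps - \eps^2) \ge \sum_{i \in \cals_t} s_i,
\]
contradicting the fact that the total area of items of $\cals_t$ is $\sum_{i \in \cals_t} s_i$. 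Hence every small item fits in an already open bin, and scenario feasibility is preserved because, within each bin, the substitution only decreases the total size of items of type $t$; consequently $\vbps(\calp) \le \vbps(\hat \calp)$.

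Putting the bounds together,
\[
\vbps(\calp) \le (1+\eps)(1 + 2^{d+1}(d+1)\eps)\opt(I) + (1+\eps) + 2^d \le (1 + \eps')\opt(I) + 2 + 2^d,
\]
using the choice of $\eps$ for the multiplicative term and $\eps \le 1$ for the additive one. The main obstacle I expect is precisely the replacement step: arguing that the rounding slack $\eps - \eps^2$ built into the definition of $\hat \cals_t$ is exactly enough for a greedy repacking to absorb all of $\cals_t$ inside the bins already opened by $\hat \calp$, both in terms of total volume and without violating any per-scenario capacity.
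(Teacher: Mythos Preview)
Your proposal is correct and follows essentially the same approach as the paper: fix $\eps$, apply Lemma~\ref{lemma:large_items} to $\hat I$, substitute the genuine small items back for the artificial ones without opening new bins, and combine with Lemma~\ref{lemma:modopt}. The only minor difference is the replacement mechanism --- you use First-Fit into the vacated $\eps$-slots, whereas the paper greedily forms groups $A \subseteq \cals_t$ with total size in $[\eps-\eps^2,\eps]$ and swaps one group per artificial item --- but both arguments hinge on the same area inequality $|\hat\cals_t|(\eps-\eps^2) \ge \sum_{i\in\cals_t} s_i$.
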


\begin{proof} 
Define $r = 2^{d+1} (d + 1)$ and define $\eps$ as the largest number such that $\eps \leq \min(\frac{\eps'}{3r}, 1/4)$ and $1/\eps$ is an integer.
Let $\hat I$ be the instance obtained from $I$ by replacing, for each type, the items smaller than $\eps^2$ with items of size $\eps$, as previously mentioned. Since each item of $\hat I$ has size at least $\eps^2$, by Lemma~\ref{lemma:large_items}, we obtain a packing $\hat \calp$ for $\hat I$ with value $\vbps(\hat \calp) \le {(1 + \eps) \opt(\hat I)} + 2^d$ in polynomial time.

For each $t \in \calt$, we place all items of $\cals_t$ over the items of $\hat \cals_t$. Precisely, we do the following: start by making a copy $\calp$ of $\hat\calp$; select a set $A$ of unpacked items in~$\cals_t$ whose total size is at least $\eps - \eps^2$ and at most $\eps$. If there is not such a set $A$, then let $A$ be the remaining set of unpacked items in~$\cals_t$. Find a bin~${B \in \calp}$ with an item $j \in \hat \cals_t\cap B$ and replace $j$ by the items of $A$. Since there are $\lceil \sum_{i \in \cals_t} s_i / (\eps - \eps^2) \rceil$ items in $\hat \cals_t$, and each group~$A$, except perhaps one of them, has size at least $\eps - \eps^2$, we always find an unused item $j \in \hat \cals_t$; repeat these steps while there are unpacked items in $\cals_t$ for some $t$.

The resulting packing $\calp$ contains all items in $\cali$. As no bin was added, $\calp$ has the same value of $\hat \calp$. Using Lemma~\ref{lemma:modopt}, we finally obtain
\begin{align*}
\vbps(P) &= \vbps(\hat P) \le (1 + \eps) \opt(\hat I) + 2^d \\
         &\le (1 + \eps) [ (1 + 2^{d+1} (d + 1)\eps)\opt(I) + 1 ] + 2^d \\
        %  &= (1 + \eps) [ (1 + r\eps)\opt(I) + 1 ] + 2^d \\
        %  &= (1 + \eps)(1 + r\eps)\opt(I) + 2^d + 1 + \eps\\
         &= (1 + r\eps^2 +r\eps + \eps)\opt(I) + 2^d + 1 + \eps\\
         &\leq (1 + 3r\eps)\opt(I) + 2^d + 1 + \eps\\
         &\le (1 + \eps') \opt(I) + 2 + 2^d.\tag*{\qedhere}
\end{align*}
\end{proof}

\section{A Variable Neighborhood Search Algorithm\label{sec-vns}}
The Variable Neighborhood Search is a metaheuristic that searches in different neighborhood structures and has systematic change of neighborhood to find better solutions and escape from local optima. \citet{mladenovicH97} proposed the VNS not only for combinatorial optimization problems, but for optimization in general. Due to its high applicability to similar problems \citep{puchingerR08,fleszarH02,beltranCCM04}, we apply the VNS to solve the BPPS\@.

For successfully implement it, we need to clarify three key characteristics of the problem: the local search neighborhood structures and how to navigate the search space; sensitive and computationally viable objective functions; and the stopping criterion. In the following, we detail each of these and describe the resulting algorithm.

\subsection{Neighborhood structures}\label{subsec:structures}

A neighborhood structure specifies a well-defined way to get from any given solution into another solution that is ``close'' to the first one. We refer to these neighborhood structures as a  ``movement'' that takes one solution as input and, depending on some structure-dependent input parameters, it returns a different but ``close'' solution.
We define four neighborhood structures for the BPPS in the following order:

\begin{itemize}
    \item $N_1$: Given items $i$ and $j$ packed into different bins, move $i$ to the bin currently containing $j$, then move $j$ to the bin that contained $i$; %The neighborhood size is $O(n^2)$;
    \item $N_2$: Given an item $i$ and a bin $B$ that currently does not contain $i$, move $i$ to $B$; 
    %The neighborhood size is $O(n^2)$;
    \item $N_3$: Given bins $B_1, B_2$, and $B_3$ and items $i$ and $j$ packed into $B_1$, move these items to $B_2$ and $B_3$, respectively; %The neighborhood size is $O(n^4)$;
    \item $N_4$: Given a bin $B$, remove it from the solution and repack the items in the first bin that can accommodate each of them, respecting the order that they appeared in $B$. %The neighborhood size is $O(n)$.
\end{itemize}

Neighborhoods $N_1, N_2$, and $N_3$ are prioritized according to their complexity. Although the neighborhood structure $N_4$ is the smallest, it can take comparatively more computational effort to compute since we have to repack all items of a given bin, so we consider it as the last one.

\subsection{Objective function}

The objective function for the BPPS may not be sensitive enough to rank two neighbor solutions. Thus, to guide the VNS, we define a fitness function that penalizes solutions in which the bin occupation per scenario is small.

Let $\calb$ be a solution for the BPPS and $\calb_k$ be the subset of bins that contain items in scenario~$k$, where $|\calb_k|$ is the number of bins used in $k$. Furthermore, let $\text{ocp}(k,B)$ be the total size of the items in bin $B$ from scenario $k$. Given a solution $\calb$, the VNS moves to a neighbor solution $\calb'$ such that $f(\calb) > f(\calb')$, for $f$ defined as:
\begin{align*}
f(\calb) &=  \vbps(\calb) - \sum_{k \in \calk} \sum_{B \in \calb_k} {\left(\frac{\text{ocp}(k,B)}{|\calb_k| W}\right)}^2
\end{align*}

\subsection{Stopping criterion}

Since the VNS cannot tell whether an optimal solution is reached, we use two stopping criteria to decide when the algorithm stops. The first criterion is the \emph{timeout}, where a preset CPU time limit ($t_{\max}$) is imposed, so the VNS stops as soon as the time limit is exceeded. The second criterion is the \emph{local-convergence}, where a preset number of iterations ($c_{\max}$) is imposed, so the VNS stops if the best known solution is not updated within this number of iterations.

\subsection{Algorithm}

The VNS takes as input an initial solution $x$, a number of neighborhood structures $N_{\max}=4$, a time limit $t_{\max} = 1800$ seconds and a convergence limit $c_{\max} = 500$ iterations (these values were achieved after preliminary computational tests). The initial solution $x$ is obtained as follows: items are sorted in non-increasing order of their sizes and then packed in the first open bin where it fits, considering the capacity constraints in every scenario and opening a new bin whenever necessary. 

The overall structure of the proposed VNS is given in the following algorithm:
{\small
\begin{codebox}
  \Procname{$\proc{VNS}(x,N_{\max},t_{\max},c_{\max})$}
    \li $t \gets 0$,
    $c \gets 0$
    \li \While $t < t_{\max}$ and $c < c_{\max}$
    \li     \Do
                $\kappa \gets 1$
    \li         $improvement \gets \text{False}$                \>\>\>\>\>\>\> \Comment Assume there will be no improvement
    \li         \Repeat
    \li             $x' \gets \proc{Shake}(x,\kappa)$                \>\>\>\>\>\>   \Comment Get a random solution in $N_\kappa(x)$
    \li             $x'' \gets \proc{Local-Search}(x',\mathcal{N}_{\max})$ \>\>\>\>\>\>   \Comment Perform VND to improve $x'$
    \li             \If $f(x'') < f(x)$                         \>\>\>\>\>\>   \Comment There has been an improvement
    \li                 \Then
                            $x \gets x''$
    \li                     $\kappa \gets 1$
    \li                     $improvement \gets \text{True}$
    \li                 \Else                                   \>\>\>\>\>     \Comment Local optima, change neighborhood
    \li                     $\kappa \gets \kappa+1$
                        \End
    \li         \Until $\kappa \isequal N_{\max}$
    \li         \If $improvement$                               \>\>\>\>\>\>\> \Comment There has been an improvement
    \li            \Then $c \gets 0$
    \li         \Else                                           \>\>\>\>\>\> \Comment No improvement in this iteration
    \li             $c \gets c + 1$
                \End
    \li         $t \gets \proc{CPU-Time}()$                     \>\>\>\>\>\>\> \Comment Update processing time.
            \End
\end{codebox}
}

The VNS uses a \proc{Shake} procedure that simply returns a solution $x'$ randomly selected from the neighborhood $N_\kappa$ of $x$. The \proc{Local-Search} procedure is based on the Variable Neighborhood Descent (VND) approach, which is the deterministic descent-only version of the VNS\@. Our local search  takes as input a solution $x$ and the number of neighborhood structures $N_{\max}=4$, resulting in the following algorithm:
{\small
\begin{codebox}
    \Procname{$\proc{Local-Search}(x,N_{\max})$}
    \li $\kappa \gets 1$
    \li \Repeat
    \li     $x' \gets \arg\min_{y \in N_\kappa(x)} f(y)$ \>\>\>\>\>\> \Comment Find the first best neighbor of $x$
    \li     \If $f(x') < f(x)$                                  \>\>\>\>\>\> \Comment There has been an improvement
    \li         \Then
                    $x \gets x'$
    \li             $\kappa \gets 1$
    \li         \Else                                           \>\>\>\>\> \Comment Local optima, change neighborhood
    \li             $\kappa \gets \kappa+1$
                \End
    \li     \Until $\kappa \isequal N_{\max}$
\end{codebox}
}

\section{A Branch-and-Price Algorithm\label{sec-branchprice}}
We propose an exact method for the BPPS based on column generation and branch-and-bound, that is, a branch-and-price algorithm. This method consists of modelling the problem as an ILP with a very large (usually exponential) number of variables, which are generated and added to the model only as they are needed, in each node of the branch-and-bound tree. Branch-and-price has been used successfully to solve practical instances of the BPP and several of its variants \citep{DelormeI16,sadykovV13,dellamicoFI20,borgesMSX20,bettinelliCR10}. 

One of the most successful formulations for the classical BPP and variants is based on associating a variable to each cutting pattern (see, e.g., \citet{DelormeI16}).
In practice, such kinds of formulations lead to models with an exponential number of variables, as the number of cutting patterns is usually exponential with respect to the number of items.
Due to this, to solve practical instances with such models, one has to rely on column generation to solve the corresponding linear relaxation.

The \emph{column generation method}, proposed by \citet{FF58} and generalized by \citet{DW61}, solves the linear relaxation of models with a large number of variables. \citet{GilmoreG61, GilmoreG63} were the first to present practical experiments based on column generation, by solving the linear relaxation of an exponential model for the BPP\@. The column generation method solves the linear relaxation by iteratively solving the \emph{restricted master problem} (RMP), which considers only a subset of variables (columns) of the original model. At each iteration, the method has to determine the existence of non-basic columns which are candidate to improve the current solution of the RMP, i.e., columns with negative reduced cost (on minimization problems). If the method determines that no such column exists, then the solution of the RMP is an optimal solution for the original linear relaxation, providing a bound for the integer problem.

\subsection{A pattern-based model}
%
%For the BPPS, we represent a cutting pattern as a vector $p=(a_{1p},...,a_{np}, b_{1p},...,b_{dp})$ of binary coefficients, such that: each coefficient $a_{ip}$ is equal to $1$ if and only if item $i$ is in pattern $p$, and each coefficient $b_{kp}$ is equal to $1$ if and only if some item of the scenario $k$ is in pattern $p$.
%
Recalling that $\patternset$ is the set of all cutting patterns for the BPPS, the following exponential ILP model solves the BPPS\@:
\begin{align}
\label{CGModel:ObjectiveFunction}
\mathrm{minimize}\ \ \ & \calf, &  \\
\label{CGModel:ItemsConstraints}
\mathrm{s.t.:}\ \ \ & \displaystyle \sum_{p \in \patternset} a_{ip} X_p \geq 1, & \forall i\in \mathcal{I};\\
\label{CGModel:ScenariosConstraints}
&  \displaystyle \sum_{p \in \patternset} b_{kp} X_p \leq \calf, & \forall k\in \mathcal{K};\\
\label{CGModel:11}
& X_p \in \{0,1\}, & \forall p\in \patternset;  \\
\label{CGModel:22}
& \calf \in \mathbb{Z}_+. & 
\end{align}

For each pattern $p\in \patternset$, binary variable $X_{p}$ is equal to $1$ if and only if $p$ is used in the solution. The integer variable $\calf$ represents an upper bound on the number of bins (patterns) that is used by each individual scenario. The objective function~\eqref{CGModel:ObjectiveFunction} is to minimize $\calf$. Constraints~\eqref{CGModel:ItemsConstraints} guarantees that every item belongs to at least one cutting pattern of the solution. Constraints~\eqref{CGModel:ScenariosConstraints} ensures that, for every scenario $k\in \mathcal{K}$, the number of bins that packs at least one item from $k$ is at most $\calf$. The domain of variables is in~\eqref{CGModel:11} and~\eqref{CGModel:22}.

Next, we describe a column generation algorithm to solve the linear relaxation of~\eqref{CGModel:ObjectiveFunction}--\eqref{CGModel:22}. As the initial set of columns for the RMP, we consider each cutting pattern with a single item. The dual solution of the RMP consists of a value $\alpha_i \geq 0$ for each item $i$ associated with a constraint in~\eqref{CGModel:ItemsConstraints}, and a value $\beta_k\leq 0$ for each scenario $k$ associated with a constraint in~\eqref{CGModel:ScenariosConstraints}. The dual solution is used to solve the {\em pricing problem}, which determines a column with minimum reduced cost, that is
\begin{equation}
%\label{CGModel:PricingProblem}
\nonumber
\min_{p\in \patternset} \left\{ c_p-\left( \sum_{i\in \mathcal{I}}\alpha_i a_{ip} + \sum_{k\in K}\beta_k b_{kp}\right)\right\},
\end{equation}
which is equivalent to
\begin{equation}
\label{CGModel:ModifiedPricingProblem}
\max_{p\in \patternset} \left\{ \sum_{i\in \mathcal{I}}\alpha_i a_{ip}+\sum_{k\in K}\beta_k b_{kp}\right\}.
\end{equation}

%The pricing problem~\eqref{CGModel:ModifiedPricingProblem} is equivalent to solving a {\em knapsack problem with scenarios} (KPS). In Section \ref{Sec:PricingProblem} we provide an exact and an heuristic solution method to solve the KPS and, consequently, the pricing problem.

\subsection{The pricing problem}\label{Sec:PricingProblem}
In order to generate a new column, we need to find a feasible pattern that optimizes~\eqref{CGModel:ModifiedPricingProblem}. Since it includes a term for each item and a term for each scenario used in the pattern, we can describe the pricing problem as the following Knapsack Problem with Scenarios.

\paragraph{Knapsack Problem with Scenarios (KPS)}
In the KPS, we are given: a number $d$ of scenarios; a set $\cali = [n]$ of $n$ items, with each item $i\in \cali$ having a size $s_i$, item value $v_i \in \mathbb{Q}$, and a set of scenarios $\calk_i \subseteq [d]$; a vector $u \in \mathbb{Q}^d$ of scenario values, and a knapsack of capacity $W \in \mathbb{Q}_+$. For simplicity, for each item $i\in \mathcal{I}$, define:
\begin{equation*}
  s_i^{k} = \begin{cases}
  s_i, \text{ if item } i \text{ is in the scenario }k;\\
  0, \text{ otherwise}.
  \end{cases}
\end{equation*}
A solution is a subset $B$ of $\cali$ such that for each scenario $k \in \calk$, $\sum_{i \in B} s_i^k \le w_k$.
The objective is to find a solution that maximizes
\[
\sum_{i \in B} v_i + \sum_{k: s_i^k > 0,\, i \in B} u_k.
\]

\begin{theorem}\label{theo:kps}
  KPS is $\NP$-hard in the strong sense.
\end{theorem}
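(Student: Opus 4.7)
The plan is to reduce the \emph{Maximum Independent Set} problem---which is known to be strongly $\NP$-hard---to KPS. Given an instance $G = (V, E)$ of Max IS, I would build a KPS instance with one item per vertex and one scenario per edge: for every $v \in V$ create an item $i_v$ with $s_{i_v} = 1$ and $v_{i_v} = 1$; for every edge $e = \{u, v\}$ create a scenario $k_e$ containing exactly the two items $\{i_u, i_v\}$; set $u_k = 0$ for every scenario and take the knapsack capacity equal to $1$.

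To verify correctness, I would check both directions of the correspondence. A candidate solution $B$ is feasible if and only if for every scenario $k_e$ with $e = \{u, v\}$ one has $s_{i_u}^{k_e} + s_{i_v}^{k_e} \leq 1$, which forbids including both endpoints of any edge; hence the vertices corresponding to $B$ form an independent set in $G$, and conversely every independent set of $G$ induces a feasible $B$. Since all scenario bonuses vanish, the KPS objective collapses to $\sum_{i \in B} v_i = |B|$, so an optimal KPS solution corresponds to a maximum independent set of $G$.

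To conclude strong $\NP$-hardness, I would observe that every numerical parameter of the constructed instance is either $0$ or $1$, hence polynomially bounded in $|V| + |E|$; combined with the strong $\NP$-hardness of Maximum Independent Set, this yields the claim. There is essentially no technical obstacle in this plan; the only bookkeeping subtlety is that the KPS statement uses both a single capacity $W$ and the per-scenario notation $w_k$, but setting all capacities to $1$ makes the reduction valid under either reading. If one prefers to reduce from a more ``arithmetic'' strongly $\NP$-hard problem, the same construction also goes through starting from \textsc{3-Dimensional Matching} or \textsc{Vertex Cover}, but the Independent-Set reduction is the shortest.
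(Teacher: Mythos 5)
Your proof is correct and follows the same overall strategy as the paper: a reduction from Maximum Independent Set with unit item sizes, unit item values, zero scenario values, and unit capacity, so that feasibility encodes independence and the objective simply counts the chosen vertices. The one genuine difference is the scenario gadget. You create one scenario per \emph{edge}, containing exactly the two endpoint items; the paper instead creates one scenario per \emph{vertex} $j$ and places in it $j$ together with all of its neighbours. Your edge-based variant is actually the safer choice: in the vertex-based construction a scenario $k_j$ contains the whole closed neighbourhood of $j$, so two non-adjacent vertices that share a common neighbour $j$ already overload scenario $k_j$ (consider the two endpoints of a path on three vertices), and the claimed correspondence with independent sets breaks down --- that instance really encodes a distance-two independent set rather than an ordinary one. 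Your construction avoids this, since each scenario holds only the two endpoints of a single edge and feasibility is exactly the independent-set condition; the number of scenarios is $|E|$, still polynomial. Your closing observation that all numerical data are $0$ or $1$, so that hardness is inherited in the strong sense, completes the argument correctly.
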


\begin{proof}
  Let us recall the Maximum Independent Set problem, which given a graph $G$ consists of finding a subset of maximum cardinality of its vertices such that no two of its elements are adjacent in $G$. This problem is known to be strongly \NP-hard \citep{gareyJ78}, so all we need to do is present a polynomial reduction from this problem into the KPS\@.

  Let $I = G(V,E)$ be an instance of the Maximum Independent Set problem. We construct an instance $I' = (d, \mathcal{I}, s, v, \calk, u, W)$ of the KPS such that $|\mathcal{I}| = |V|$, $d = |V|$. For each item $i \in \mathcal{I}$ there is a scenario $k_i \in [d]$ associated with it. We define
  $\calk_i = \{k_i\} \cup \{k_j \colon (i, j) \in E\}$ and we let $s_i^k = 1$ if $k \in \calk_i$. We define $v_i = 1$ for all $i \in \mathcal{I}$, $u_k = 0$ for all $k \in [d]$ and $W = 1$. 
  
  Now notice that there is a bijection between solutions of $I$ and $I'$ which preserves the value of the objective function two items corresponding to two adjacent vertices of $V$ cannot be in the same solution of $I'$, the items are unit-valued and the scenarios are zero-valued.\qed
\end{proof}

As a consequence of Theorem~\ref{theo:kps}, our pricing subproblem does not admit a pseudo-polynomial time algorithm, unless $P=\NP$, thus, we propose the following ILP model to solve it. 

%Given the dual solution $(\alpha, \beta)$ of the RMP, to find a cutting pattern that solves the pricing problem, we solve the ILP model:
We can use the dual solution $(\alpha, \beta)$ of the RMP as item-values and scenario-values $(u,v)$ to describe the following model for the pricing subproblem
\begin{align}
\label{PricingModel:1}
\mathrm{maximize}\ \ \ & \sum_{i\in \mathcal{I}}\alpha_i A_i+\sum_{k\in \mathcal{K}}\beta_k B_k &   \\
\label{PricingModel:Constraints1}
\mathrm{s.t.:}\ \ \ & \displaystyle \sum_{i \in \mathcal{I}} s_i^{k} A_i \leq W B_k, & \forall k\in \mathcal{K};\\
\label{PricingModel:3}
& A_i, B_k \in \{0,1\}, & \forall i\in \mathcal{I}, k\in \mathcal{K} .
\end{align}

In model~\eqref{PricingModel:1}-\eqref{PricingModel:3}, variables $A_i$ are related to the coefficients $a_{ip}$, and variables $B_k$ are related to the coefficients $b_{kp}$. Hence, variables $A$ and $B$ determine the cutting pattern that is generated. The objective function~\eqref{PricingModel:1} is related to the pricing formula in~\eqref{CGModel:ModifiedPricingProblem}.  Constraints~\eqref{PricingModel:Constraints1} assures that the solution of the model represents a valid cutting pattern for the BPPS\@. Notice that $W$ is the capacity of a bin, then if a scenario $k\in \mathcal{K}$ is used ($B_k=1$), the sum of the sizes of all items that belong to the scenario $k$ must be at most $W$, and if this scenario is not used ($B_k=0$), then the sum of the sizes of all items that belong to $k$ must be equal to~$0$.

\subsection{The branch-and-price algorithm}

To solve the model~\eqref{CGModel:ObjectiveFunction}-\eqref{CGModel:22}, we propose a branch-and-price algorithm. The branch-and-price is based on an enumeration of the fractional solution of the column generation algorithm.
For this end, we use the branching scheme of \citet{ryanF81}. Given a fractional solution $X^*$, we know, from \citet{vanceBHN94}, that there exists rows $l$ and $m$ such that 
\[
  0 < \sum_{p:a_{lp}=1, a_{mp}=1} X_p < 1.
\]
We use this pair of rows to create two branches for the current node. On one side we enforce that items $l$ and $m$ must be packed in the same pattern, and on the other side we enforce that they must be packed in different patterns. This is achieved by the following branching constraints
\begin{align}
\sum_{p:a_{lp}=1, a_{mp}=1} X_p \geq 1  \label{CGModel:GEQBranch} \\
\sum_{p:a_{lp}=1, a_{mp}=1} X_p \leq 1 \label{CGModel:LEQBranch}
\end{align}

Rather than explicitly adding these constraints to the RMP, they are enforced by setting the infeasible columns' upper bound to zero. On the branch of~\eqref{CGModel:GEQBranch}, this is equivalent to combine items $l$ and $m$. For the branch of~\eqref{CGModel:LEQBranch}, this is equivalent to making rows $l$ and $m$ disjoint. However, this is not enough, since we must also avoid infeasible columns to be re-generated on each branch, which can be achieved by modifying the underlining pricing subproblem as follows: 
\begin{itemize}
  \item
    For the branches in which items $l$ and $m$ are combined, it suffices to modify the set of items to $\mathcal{\bar I} = \{1,\dots,n\} \setminus \{l, m\} \cup \{n+1\}$, with $\alpha_{n+1} = \alpha_l + \alpha_m$,
\begin{align}
  s_{n+1}^{k} = 
  \begin{cases}
    s_l^{k}, &\text{ if }  s_l^{k} > 0 \text{ and } s_m^{k} = 0, \\
    s_m^{k}, &\text{ if }  s_l^{k} = 0 \text{ and } s_m^{k} > 0, \\
    s_l^{k} + s_m^{k}, &\text{ if }  s_l^{k} > 0 \text{ and } s_m^{k} > 0, \\
    0, &\text{ otherwise;}
  \end{cases}
\end{align}

\item
  For branches in which items $l$ and $m$ must be packed in different patterns, we simply add the following disjunction constraint 
\begin{equation}
  A_l + A_m \leq 1.
\end{equation}
\end{itemize}
These changes can be easily accommodated into the model~\eqref{PricingModel:1}-\eqref{PricingModel:3} without much effort.

To obtain an upper bound at each node, we solve the current restricted master problem with the integrality constraint. In other words, we solve an ILP model that consists only of the columns that have been already generated. The use of this method at each node can be computationally expensive, but it usually provides tight upper bounds.

Some instances of the problem may be too complex to be solved by the column generation algorithm in practical time. In this manner, we would need faster methods for obtaining lower bounds for such instances. Two methods are proposed to get a lower bound at the root node of the branch-and-price algorithm. One of them is based on the continuous lower bound for the bin packing problem, and it is given by the following equation:

\begin{equation}
\label{LB:continuous}
\textit{LB}_{\textit{CON}} = \max_{k\in \mathcal{K}} \left\{ \ceil*{ \sum_{i\in S_k} s_{i}^k/W } \right\}.
\end{equation}

The idea behind the continuous lower bound~\eqref{LB:continuous} is to break items into unitary-sized items to fill empty spaces in a packing, which could not be filled otherwise. The $LB_{CON}$ computes the continuous lower bound for every scenario, and returns the largest value between them.

The other lower bound that we use is based on dual feasible functions. A \emph{dual feasible function} (DFF) is a function $f$ such that, for any finite set $X$ of real numbers, if $\sum_{x\in X}x \leq 1$, then $\sum_{x\in X}f(x) \leq 1$. For a survey on DFFs, we refer to \citet{Clautiaux10}. Given a DFF~$f$, we can construct a lower bound based on it, similarly to the continuous lower bound, as given in the following equation:

\begin{equation}
\label{LB:dff}
\textit{LB}_{\textit{DFF}} = \max_{k\in K} \{ \lceil \sum_{i\in S_k} f(s_{i}^k/W) \rceil \}
\end{equation}

In the proposed branch-and-price algorithm, given $\lambda \in \{1,\ldots, W/2\} $, we consider the DFF proposed by \citet{FeketeS01} and described in~\eqref{dff:fk}:

\begin{equation}
\label{dff:fk}
f(s) =
\begin{cases}
W, \text{ if }s > W-\lambda;\\
0, \text{ if }s \leq \lambda;\\
s, \text{ otherwise.}
\end{cases}
\end{equation}

\section{Numerical Experiments\label{sec-experiments}}
The efficiency of the proposed VNS and B\&P algorithm is evaluated with computational experiments. Both solutions methods were implemented in the C++ programming language, with the B\&P algorithm using the Gurobi Solver version 8.1~\cite{gurobi} to solve the linear programming models. The experiments were carried out in a computer with an Intel(R) Xeon(R) CPU E5--2630 v4 processor at 2.20 GHz, with 64 GB of RAM and under the Ubuntu 18.04 operating system.

A total of 120 instances were generated, divided into 12 different classes of~10 instances each. The classes are combinations of the number of items $n \in \{10, 50, 100, 200\}$ and the number of scenarios $d \in \{ 0.5n, n, 2n \}$, which depends on the number of items of the class. For every instance, the size of the items is randomly generated  over the set $\{ 1, \ldots, 99\}$ under a uniform probability distribution, whereas the size of the bin is fixed to $100$. The scenarios were generated randomly, similarly to~\citet{bodisB18}, such that an item $i\in \mathcal{I}$ belongs to the scenario $k\in \mathcal{K}$ with probability $0.5$.

A summary of the computational results is presented in Table \ref{tab:results}. Columns~$n$ and $d$ present, respectively, the number of items and the number of scenarios of each class of instances. We consider three sets of columns, each representing one of the tested algorithms, with the last one, VNS+B\&P, being the B\&P algorithm using the solution obtained from the VNS as a warm-start. The columns $gap$ represent the average optimality gap that each algorithm achieved in each class, computed as $\frac{UB-LB}{UB}$, where $UB$~and~$LB$ are respectively the upper and lower bounds obtained. Notice that since the VNS does not produce a lower bound on its own, we used the lower bound from the VNS+B\&P to compute its optimality gap. Columns $time$ represent the average time in seconds that each algorithm spent to solve each instance of the corresponding class, being $1800$ seconds the time limit for the VNS and VNS + B\&P algorithms, and a time limit of $3600$ for B\&P. Columns $|OPT|$ represent the number of instances from each class that were solved to optimality (out of 10) by each algorithm. Columns $cols$ and $nodes$ represent the average number of columns generated and the number of explored nodes, respectively, considering the B\&P algorithm.

%The columns $GAP_{CON}$, $GAP_{DFF}$ and $GAP_{B\& P}$ represents the average GAP obtained by comparing the best upper bound obtained by the algorithm with, respectively, the continuous lower bound, the DFF lower bound and the best lower bound obtained by the branch-and-price algorithm. The columns $time_{tot}$ and $time_{prc}$ represents, respectively, the average of the total execution time of each instance, in seconds, and the average percentage of the pricing time compared to the total execution time. A time limit of 3600 seconds was imposed for the execution of each instance. The columns $cols$, $nodes$ and $|OPT|$ represents, respectively, the average number of columns generated by the column generation algorithm, the average number of nodes explored by the branch-and-price algorithm and the total number of instances which were solved to optmality.

\begin{landscape}
\clearpage
\begin{table}
\centering
\label{tab:results}
\begin{tabular}{@{}cccccccccccccccccc@{}}
\toprule
             &              & \multicolumn{1}{l}{} & \multicolumn{3}{c}{VNS}                          &  & \multicolumn{5}{c}{B\&P}                              &  & \multicolumn{5}{c}{VNS + B\&P}                        \\ \cmidrule(lr){4-6} \cmidrule(lr){8-12} \cmidrule(l){14-18} 
$n$          & $d$          & \multicolumn{1}{l}{} & $gap$ & $time$ & $|\text{OPT}|$ &  & $gap$ & $nodes$ & $cols$ & $time$  & $|\text{OPT}|$ &  & $gap$ & $nodes$ & $cols$  & $time$  & $|\text{OPT}|$ \\ \midrule
10           & 5            &                      & 0.00\%                & 0.00    & 10             &  & 0.00\%  & 1.0     & 6.9    & 0.02    & 10             &  & 0.00\%  & 0      & 0       & 0.00    & 10             \\
10           & 10           &                      & 0.00\%                & 0.00    & 10             &  & 0.00\%  & 1.6     & 14.0   & 0.03    & 10             &  & 0.00\%  & 0.3    & 5.4     & 0.00    & 10             \\
10           & 20           &                      & 1.67\%                & 0.00    & 9              &  & 0.00\%  & 1.2     & 11.5   & 0.03    & 10             &  & 0.00\%  & 0.3    & 3.4     & 0.00    & 10             \\
50           & 25           &                      & 7.11\%                & 14.80   & 1              &  & 0.00\%  & 6.4     & 156.1  & 11.21   & 10             &  & 1.01\%  & 955.8  & 98256.4 & 364.99  & 8              \\
50           & 50           &                      & 5.24\%                & 28.80   & 1              &  & 0.00\%  & 15.7    & 289.8  & 30.08   & 10             &  & 0.50\%  & 321.8  & 34501.3 & 187.00  & 9              \\
50           & 100          &                      & 7.26\%                & 28.80   & 0              &  & 0.00\%  & 270.2   & 2401.1 & 569.41  & 10             &  & 0.93\%  & 129.8  & 10871.7 & 370.18  & 8              \\
100          & 50           &                      & 9.34\%                & 134.30  & 0              &  & 1.44\%  & 80.8    & 2381.6 & 1832.76 & 5              &  & 0.56\%  & 6.6    & 4575.5  & 516.59  & 8              \\
100          & 100          &                      & 8.25\%                & 181.20  & 0              &  & 2.21\%  & 85.3    & 2399.6 & 3019.09 & 2              &  & 1.38\%  & 29.2   & 12419.1 & 1193.98 & 5              \\
100          & 200          &                      & 7.78\%                & 240.40  & 0              &  & 1.51\%  & 41.4    & 1028.1 & 2710.84 & 4              &  & 1.72\%  & 5.8    & 3577.8  & 1052.04 & 6              \\
200          & 100          &                      & 16.32\%               & 1048.80 & 0              &  & 13.74\% & 1.0     & 532.7  & 3814.57 & 0              &  & 20.13\% & 1      & 3323.1  & 1817.98 & 0              \\
200          & 200          &                      & 18.56\%               & 1407.70 & 0              &  & 17.52\% & 1.0     & 460.3  & 3925.27 & 0              &  & 20.07\% & 1      & 4801.9  & 1828.71 & 0              \\
200          & 400          &                      & 18.31\%               & 1595.30 & 0              &  & 20.16\% & 1.0     & 362.7  & 3866.15 & 0              &  & 20.29\% & 1      & 4832.4  & 1817.93 & 0              \\ \midrule
\multicolumn{2}{c}{Average} &                      & 8.32\%                & 390.01  & 2.58           &  & 4.71\%  & 42.21   & 837.0  & 1648.28 & 5.91           &  & 5.55\%  & 121.05 & 14764   & 762.45  & 6.16           \\ \bottomrule
\end{tabular}
\caption{Summary of the computational results for all the algorithms tested.}
\end{table}
\end{landscape}
\clearpage% Flush page

First, we discuss the performance of the VNS algorithm. From Table~\ref{tab:results}, we can notice that most instances with $10$ items are easily solved by the heuristic. However, for instances of size $50$ and $100$ we can notice a significant larger gap, even though the algorithm time never reaches its time limit. This indicates that the VNS quickly found a local-optima and was not able to escape it, thus prematurely converging. Something similar can be observed for the instances of size $200$, although the running time gets closer to the time limit in these cases. For this algorithm, only two instances of size $50, 100$, and $200$ are solved to optimality, achieving a total average gap of $8.32$\% in an average time of $390.01$ seconds.  

Observing the results for algorithm B\&P, we can notice a significant improvement. All instances of $10$ and $50$ items are solved to optimality very quickly (100 seconds on average). We start to notice a drop in performance for the instances of size $100$ and $200$. For instances of size $100$, the algorithm was able to solve $11$ instances to optimality, out of $30$, averaging over $1800$ seconds for all number of scenarios. We can also observe that the average gap for these instances is very small (around $2$\% for all number of scenarios). The algorithm times out for all instances of size $200$, with an average gap of 14\%. Interestingly, the average gaps achieved by the B\&P for these instances are not far from the ones obtained by the VNS, with the latter having half the time limit of the former. The total average gap for this algorithm is $4.71$\% in an average time of $1648.28$ seconds.

Finally, we analyze the VNS + B\&P algorithm, that is, B\&P using the VNS solution as a warm-start. Once again, all instances of size $10$ are completely solved in negligible time. Differing from the pure B\&P algorithm though, this one could not find optimal solutions for a few instances of size $50$. These $5$ instances brought all the averages for this group of instances up, since they were tried until the time limit of $1800$ seconds, totaling thousands of columns and hundreds of nodes more than their non-warm-started counterpart. This might happen because the solution of the VNS algorithm might impact the branching step of the B\&P, making it harder for the algorithm to converge depending on the branch it takes initially. However, we can notice that for instances of size $100$, the VNS + B\&P solved $19$ instances to optimality, $8$ more than the pure B\&P and with a significant smaller average time. This algorithm also timed out for all instances of size $200$, while presenting a greater gap in average compared to the pure B\&P because of the smaller time limit. When compared to the pure B\&P algorithm, the VNS + B\&P gives an average gap greater by $0.16$\% while using less than half the time in average.

The data from Table~\ref{tab:results} shows us that although the VNS algorithm struggles to avoid local-optima, its results serve as good upper bounds to be fed into a more elaborated model such as the B\&P presented. By using the VNS's result as a warm-start, the B\&P algorithm was able to solve more instances in less time, this was not enough to tackle instances with $200$ items.

\section{Concluding Remarks\label{sec-conclusions}}
This work deals with a variant of the bin packing problem where items occupy the bin capacity when they belong to the same scenario. For this problem, a variety of solution methods are proposed, including an absolute approximation algorithm, an asymptotic polynomial time approximation scheme, a variable neighborhood search based heuristic, and a branch-and-price that solves a set-cover-based formulation. Results of the heuristic and the branch-and-price illustrate how effective is the latter to solve small and medium size instances, having its efficiency improved when the heuristic provides a start solutions. 

While the approximation algorithms depend either on an approximation algorithm for the vector bin packing problem, for the absolute approximation algorithm, or the number of scenarios, for the APTAS, the heuristic and branch-and-price can be used to handle the problem in practical contexts. The heuristic is not so competitive with the branch-and-price algorithm, since the former only optimally solved 26\% of the instances, while the latter obtained optimal solutions for 59\% of the instances. If solutions of the heuristic are used as a warm-start for the branch-and-price, the number of optimal solutions increases to about~62\%. In total, we were able to prove optimality for $79$ out of the $120$ proposed instances. 

Future research can focus on new approximation algorithms that explore the influence of the scenarios. Improvements in the variable neighborhood search are also expected, including new neighborhood structures and local searches. Regarding the branch-and-price, we will work on valid cuts and the possibility of dynamic programming algorithms for the pricing problem.

\section*{Compliance with Ethical Standards}
The authors acknowledge the financial support of the 
National Counsel of Technological and Scientific Development (CNPq grants 
144257/2019--0, % yulle
312186/2020--7, % Lehilton
311185/2020--7, % TaQ
311039/2020--0, % rafael
405369/2021--2, % TaQ
313146/2022--5% FKM
), 
the State of Goi\'as Research Foundation (FAPEG), 
and the State of S\~ao Paulo Research Foundation (FAPESP grant 
2017/11831--1%Loti
). 
This study was financed in part by the Coordena\c{c}\~ao de Aperfei\c{c}oamento de Pessoal de N\'ivel Superior~-~Brasil (CAPES)~-~Finance Code 001.

\section*{Ethical approval}
This article does not contain any studies with human participants or animals performed by any of the authors.

\section*{Data availability}
The datasets generated during and/or analysed during the current study are available from the corresponding author on reasonable request.

\bibliographystyle{apa}
\bibliography{ref}

% CRediT.
% Conceptualization: Yulle G. F. Borges, Vin\'icius L. de Lima, Fl\'avio K. Miyazawa, Lehilton L. C. Pedrosa, Thiago A. de Queiroz, Rafael C. S. Schouery;
% Data curation: Yulle G. F. Borges, Vin\'icius L. de Lima; 
% Formal Analysis: Yulle G. F. Borges, Vin\'icius L. de Lima, Fl\'avio K. Miyazawa, Lehilton L. C. Pedrosa, Thiago A. de Queiroz, Rafael C. S. Schouery;
% Funding acquisition: Yulle G. F. Borges, Vin\'icius L. de Lima, Fl\'avio K. Miyazawa, Lehilton L. C. Pedrosa, Thiago A. de Queiroz, Rafael C. S. Schouery;
% Investigation: Yulle G. F. Borges, Vin\'icius L. de Lima, Fl\'avio K. Miyazawa, Lehilton L. C. Pedrosa, Thiago A. de Queiroz, Rafael C. S. Schouery;
% Methodology: Yulle G. F. Borges, Vin\'icius L. de Lima;
% Software: Yulle G. F. Borges, Vin\'icius L. de Lima;
% Supervision: Fl\'avio K. Miyazawa, Lehilton L. C. Pedrosa, Thiago A. de Queiroz, Rafael C. S. Schouery;
% Validation: Fl\'avio K. Miyazawa, Lehilton L. C. Pedrosa, Thiago A. de Queiroz, Rafael C. S. Schouery;
% Visualization: Yulle G. F. Borges;
% Writing --- original draft: Yulle G. F. Borges;
% Writing --- review \& editing: Yulle G. F. Borges, Vin\'icius L. de Lima, Fl\'avio K. Miyazawa, Lehilton L. C. Pedrosa, Thiago A. de Queiroz, Rafael C. S. Schouery;

\end{document}